\def\ps@pprintTitle{
	\let\@oddhead\@empty
	\let\@evenhead\@empty
	\def\@oddfoot{\centerline{\thepage}}
	\let\@evenfoot\@oddfoot
}
\newdefinition{mdef}{Definition}[section]
\newtheorem{prop}[mdef]{Proposition}
\newtheorem{lem}[mdef]{Lemma}
\newtheorem{thm}[mdef]{Theorem}
\newtheorem{cor}[mdef]{Corollary}
\newdefinition{fact}[mdef]{Fact}
\newdefinition{rmk}[mdef]{Remark}
\newdefinition{eg}[mdef]{Example}
\newdefinition{op}[mdef]{Open Problem}
\begin{document}

\begin{frontmatter}

\title{Characterizations of annihilator $(b,c)$-inverses in arbitrary rings}

\author[1]{Chong-Quan~Zhang}
\ead{cqzhang@shu.edu.cn}
\author[1]{Qing-Wen~Wang\corref{cor}}
\ead{wqw@t.shu.edu.cn}
\author[1,2]{Huihui~Zhu}
\ead{hhzhu@hfut.edu.cn}
\address[1]{Department of Mathematics, Shanghai University, Shanghai 200444, P.R.China}
\address[2]{School of Mathematics, Hefei University of Technology, Hefei 230601, P.R.China}
\cortext[cor]{Corresponding author.}

\begin{abstract}
In this paper, we investigate some properties of annihilator $(b,c)$-inverses in an arbitrary ring. We demonstrate that one-sided annihilator $(b,c)$-inverses of elements in arbitrary rings may behave differently in contrast to one-sided $(b,c)$-inverses. Also, we discuss intertwining property, absorption law, reverse order law, and Cline's formula for annihilator $(b,c)$-inverses. As applications, we improve and extend some known results to $(b,c)$-inverses. In particular, we derive an equivalent condition of intertwining property for $(b,c)$-inverses in semigroups.
\end{abstract}

\begin{keyword}
$(b,c)$-inverse\sep Annihilator\sep Commuting\sep Intertwining\sep Absorption~law\sep Reverse~order~law\sep Cline's~formula
\MSC[2010] 15A09\sep 16U80\sep 20M99
\end{keyword}

\end{frontmatter}

\section{Introduction}\label{sec 1}

Throughout this paper, $S$ will be denoted a semigroup, and $R$ will be used to denote an arbitrary ring (not necessarily unital). $S^1$ denotes the monoid equal to $S\cup\{1\}$ if $S$ has no identity element, and to $S$ otherwise. Since $R$ is also a semigroup under multiplication, $R^1$ denotes the monoid generated by $R$. For any $a\in R$, the left annihilator of $a$ is defined, as usual, by $\{x\in R:xa=0\}$, denoted by $\prescript{\circ}{}{a}$. Similarly, we denote by $a^\circ:=\{x\in R:ax=0\}$ the right annihilator of $a$. We also use the following notations.
\[
\prescript{\circ}{}{R}:=\{x\in R:xr=0,\forall r\in R\},\quad
R^\circ:=\{x\in R:rx=0,\forall r\in R\}.
\]

In 2011, Mary introduced a new type of generalized inverses in semigroups, called the inverse along an element, by Green's relations.
\begin{mdef}[{\cite[Definition 4]{mary_generalized_2011}}]\label{def_mary}
	Let $a,d\in S$. We call $x\in S$ an inverse of $a$ along $d$ if $x$ satisfies
	\begin{equation}\label{eq_def_mary}
	\mathrm{1)}~xad=d,\quad
	\mathrm{2)}~dax=d,\quad
	\mathrm{3)}~xS^1\subseteq dS^1,\quad
	\mathrm{4)}~S^1x\subseteq S^1d.
	\end{equation}
\end{mdef}
About one year afterward, Drazin independently defined a new class of outer generalized inverses in semigroups, called $(b,c)$-inverses, which is similar to the inverse along an element. Indeed, $x$ is an inverse of $a$ along $d$ if and only if $x$ is a $(d,d)$-inverse of $a$ by definitions.
\begin{mdef}[{\cite[Definition 1.3]{drazin_class_2012}}]\label{def}
	Let $a,b,c\in S$. We call $x\in S$ a $(b,c)$-inverse of $a$ if $x$ satisfies
	\begin{equation}\label{eq_def}
	\mathrm{1)}~xab=b,\quad
	\mathrm{2)}~cax=c,\quad
	\mathrm{3)}~x\in bSx,\quad
	\mathrm{4)}~x\in xSc.
	\end{equation}
\end{mdef}
For convenience, we will use the conditions below, which are equivalent to \eqref{eq_def} (as in \cite[Theorem 2.1]{drazin_left_2016}).
\begin{equation}\label{eq_def_useful}
\mathrm{1)}~xab=b,\quad
\mathrm{2)}~cax=c,\quad
\mathrm{3)}~x\in bS,\quad
\mathrm{4)}~x\in Sc.
\end{equation}
Actually, such $x$ is always unique (whenever it exists), denoted by $a^{(b,c)}$, and satisfies $xax=x$ ($x$ in this case is known as an outer generalized inverse of $a$). In addition, we call $y\in S$ a left (resp. right) $(b,c)$-inverse of $a$ if $y$ is a solution satisfying (\ref{eq_def_useful}.1) and (\ref{eq_def_useful}.4) (resp. (\ref{eq_def_useful}.2) and (\ref{eq_def_useful}.3)). Then, Drazin showed that several classical generalized inverses can be seen as special cases of $(b,c)$-inverses (see e.g. Lemma \ref{lem_relation} below). For more definitions and properties of $(b,c)$-inverses, we refer readers to the recent papers \cite{drazin_generalized_2014,zhu_further_2016,zhu_further_2018,mosic_bc_2018,ke_new_2018} and the reference therein.

Let $R$ be a $^*$-ring, namely, a ring with an antiautomorphism $^*$ that is its own inverse. We now recall some facts on generalized inverses. The Moore--Penrose inverse of $a\in R$, denoted by $a^\dagger$, is the unique solution to the following equations.
\begin{equation}\label{eq_def_mp}
\mathrm{1)}~axa=a,\quad
\mathrm{2)}~xax=x,\quad
\mathrm{3)}~(ax)^*=ax,\quad
\mathrm{4)}~(xa)^*=xa.
\end{equation}
The Drazin inverse of $a\in S$, denoted by $a^\mathrm{D}$, is the unique solution to the following equations.
\begin{equation}\label{eq_def_drazin}
\mathrm{1)}~a^{m+1}x=a^m\text{ for some positive integer }m,\quad
\mathrm{2)}~x^2a=x,\quad
\mathrm{3)}~ax=xa.
\end{equation}
The such least $m$ is called the Drazin index of $a$, denoted by $\operatorname{ind}(a)$.
\begin{lem}[{{\cite[p. 1910]{drazin_class_2012}}, or \cite[Theorem 11]{mary_generalized_2011}}]\label{lem_relation}
	The following statements hold.
	\begin{enumerate}[i)]
		\item Let $R$ be a $^*$-ring, and let $a\in R$. Then $a$ is Moore--Penrose invertible if and only if $a$ is $(a^*,a^*)$-invertible. In this case, $a^\dagger$ coincides with $a^{(a^*,a^*)}$.
		\item Let $a\in S$. Then $a$ is Drazin invertible if and only if $a$ is $(a^m,a^m)$-invertible for some positive integer $m$. In this case, $a^\mathrm{D}$ coincides with $a^{(a^m,a^m)}$, and $\operatorname{ind}(a)$ coincides with the least $m$ for which $a$ is $(a^m,a^m)$-invertible.
	\end{enumerate}
\end{lem}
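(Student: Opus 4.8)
The plan is to prove each part as a genuine equivalence together with the identification of the inverses, leaning throughout on the two facts recorded after \eqref{eq_def_useful}: a $(b,c)$-inverse is unique whenever it exists, and it is always an outer inverse, i.e. it satisfies $xax=x$. Thus in each direction it suffices to check that the candidate element meets the four defining conditions of the relevant inverse; uniqueness then forces the two inverses to coincide.

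For part i), I would first treat the implication ``Moore--Penrose invertible $\Rightarrow$ $(a^*,a^*)$-invertible''. Writing $x=a^\dagger$ and using the symmetry relations from \eqref{eq_def_mp} in the forms $ax=x^*a^*$ and $xa=a^*x^*$, I would take the adjoint of $axa=a$ to get $a^*x^*a^*=a^*$ and then read off (\ref{eq_def_useful}.1) $xaa^*=a^*$ and (\ref{eq_def_useful}.2) $a^*ax=a^*$ by substituting $xa=a^*x^*$ and $ax=x^*a^*$; the membership conditions (\ref{eq_def_useful}.3)--(\ref{eq_def_useful}.4) then follow from $x=xax=(a^*x^*)x\in a^*R$ and $x=xax=x(x^*a^*)\in Ra^*$. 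For the converse I would start from an $(a^*,a^*)$-inverse $x$ and verify \eqref{eq_def_mp}. The crux is self-adjointness of $ax$ and $xa$: taking adjoints of (\ref{eq_def_useful}.2) and (\ref{eq_def_useful}.1) gives $x^*a^*a=a$ and $aa^*x^*=a$, whence $ax=(x^*a^*a)x=x^*a^*=(ax)^*$ and symmetrically $xa=(xa)^*$; combined with $xax=x$ and $axa=(ax)a=x^*a^*a=a$ this yields all four Penrose equations.

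For part ii), the forward implication is the easier one: with $x=a^{\mathrm D}$ and $m=\operatorname{ind}(a)$, condition (\ref{eq_def_useful}.2) is exactly (\ref{eq_def_drazin}.1), condition (\ref{eq_def_useful}.1) follows from $ax=xa$, and the membership conditions follow from the identity $x=x^{m+1}a^m=a^mx^{m+1}$, which I would obtain by iterating $x=x^2a$ together with $ax=xa$. The substantive work is the converse. Given an $(a^m,a^m)$-inverse $x$, I would first note that computing $xa^{m+1}x$ in two groupings, using (\ref{eq_def_useful}.1)--(\ref{eq_def_useful}.2), gives $a^mx=xa^m$; feeding this back into $a^{m+1}x=a^m=xa^{m+1}$ rearranges to $axa^m=a^m$ and $a^mxa=a^m$. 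Writing $x=a^mp=qa^m$ and cancelling against these membership witnesses produces $ax^2=x$ and $x^2a=x$, from which $ax=a(x^2a)=(ax^2)a=xa$; together with $a^{m+1}x=a^m$ this shows $x$ satisfies \eqref{eq_def_drazin}, so $x=a^{\mathrm D}$ and $\operatorname{ind}(a)\le m$. The index equality then follows by combining the two directions: the forward implication gives $(a^{\operatorname{ind}(a)},a^{\operatorname{ind}(a)})$-invertibility, and the converse gives $\operatorname{ind}(a)\le m$ for every admissible $m$.

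I expect the main obstacle to lie in the converse of part ii), specifically in passing from commutation with $a^{m+1}$ (which is immediate) to commutation with $a$ itself. The decisive point is that the one-sided membership conditions $x\in a^mR$ and $x\in Ra^m$ are exactly what allow the cancellations turning $axa^m=a^m$ and $a^mxa=a^m$ into the clean identities $ax^2=x=x^2a$; without them one only controls $x$ against high powers of $a$. In part i) the analogous pivot is recognizing that the reduced conditions $a^*ax=a^*$ and $xaa^*=a^*$, via their adjoints, already encode the Penrose symmetry of $ax$ and $xa$, so that no hypothesis beyond the $(b,c)$-axioms is required.
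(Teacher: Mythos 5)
Your proof is correct. Note that the paper does not actually prove Lemma \ref{lem_relation} --- it is imported verbatim from \cite{drazin_class_2012} and \cite{mary_generalized_2011} --- so there is no in-paper argument to compare against; your strategy (verify the four conditions of \eqref{eq_def_useful} for the candidate element in each direction, then invoke uniqueness of the $(b,c)$-inverse on one side and of the Moore--Penrose or Drazin inverse on the other) is exactly the standard route taken in those references. The two genuinely nontrivial steps are handled correctly: in part i) you recover the Penrose symmetry of $ax$ and $xa$ by taking adjoints of $a^*ax=a^*$ and $xaa^*=a^*$ and substituting back, and in part ii) you correctly identify that the membership conditions $x\in a^mS$ and $x\in Sa^m$ are what upgrade the identities $axa^m=a^m$ and $a^mxa=a^m$ (themselves obtained from $a^mx=xa^m$, which follows by bracketing $xa^{m+1}x$ two ways) to $ax^2=x=x^2a$ and hence to $ax=xa$; the index equality then follows from the two inequalities as you state.
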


Recently, Raki\'{c}, Din\v{c}i\'{c}, and Djordjevi\'{c} \cite{rakic_group_2014} extended the notion of core inverses to an arbitrary $^*$-ring case. In particular, they showed that core inverses belong to the class of $(b,c)$-inverses. In short, the core inverse of $a\in R$ coincides with $a^{(a,a^*)}$. See also \cite{gao_pseudo_2018,zhu_weighted_2019}.

Note that Drazin proposed an extended version of $(b,c)$-inverses named annihilator $(b,c)$-inverses (see \cite[Definition 6.2]{drazin_class_2012}). However, it was only defined over unital rings. To resolve this issue, we present an improved definition.
\begin{mdef}\label{def_ann}
	Let $a,b,c\in R$. We shall call $x\in R$ an annihilator $(b,c)$-inverse (ann-$(b,c)$-inverse for short) of $a$ if $x$ satisfies
	\begin{equation}\label{eq_def_ann}
	\mathrm{1)}~xax=x,\quad
	\mathrm{2)}~xab=b,\quad
	\mathrm{3)}~cax=c,\quad
	\mathrm{4)}~\prescript{\circ}{}{b}\subseteq\prescript{\circ}{}{x},\quad
	\mathrm{5)}~c^\circ\subseteq x^\circ.
	\end{equation}
\end{mdef}
It is easy to see that this modification makes no difference when $R$ is unital. In this case, the condition (\ref{eq_def_ann}.1) can be dropped since (\ref{eq_def_ann}.2) and (\ref{eq_def_ann}.4) imply (\ref{eq_def_ann}.1) (or see Corollary \ref{cor_sided_ann}). By Theorem \ref{thm_unique_ann} below, these five conditions determine $x$ uniquely when it exists. Thus, we denote by $a^{\circ(b,c)}$ the unique solution to \eqref{eq_def_ann} when $a$ is annihilator $(b,c)$-invertible.

This paper is organized as follows. In Section \ref{sec 2}, we first show that, contrary to one-sided $(b,c)$-inverses of elements taken from semigroups, an element in an arbitrary ring could have left (or right) annihilator $(b,c)$-inverses that are different from its annihilator $(b,c)$-inverse (see Theorem \ref{thm_sided_ann}). We then give some examples to illustrate this point. Several results related to one-sided annihilator $(b,c)$-inverses and its uniqueness are also presented. In Section \ref{sec 3}, we consider an intertwining relation (see Theorem \ref{thm_comm_ann}), namely, $yx_1=x_2y$, where $y\in R$, and $x_i \in R$ is the annihilator $(b_i,c_i)$-inverse of $a_i\in R$ for given $b_i,c_i\in R$, $i=1,2$. More specifically, we derive equivalent conditions for each equality below to hold.
\begin{equation}\label{eq_intro_intertwining}
\mathrm{1)}~ya_1x_1=a_2x_2y;\quad
\mathrm{2)}~yx_1a_1=x_2a_2y;\quad
\mathrm{3)}~ya_1x_1=x_2a_2y;\quad
\mathrm{4)}~yx_1a_1=a_2x_2y.
\end{equation}
Indeed, such equalities can be reinterpreted by $(b,c)$-inverses in semigroups. We also determine their equivalent conditions by only using the relations between $a_i,b_i,c_i$, $i=1,2$, and $y$ (see Theorems \ref{thm_comm} and \ref{thm_intertwine}). It might be emphasized that, to the best of our knowledge, these necessary and sufficient conditions seem not to have been noted before even in the $(b,c)$-inverse case. In addition, in the one-sided $(b,c)$-inverse case, we recognize that it can only give us a necessary (or sufficient) condition for intertwining property (see Remark \ref{rmk_intertwining_sided}). As applications, we study the absorption law and the reverse order law for annihilator $(b,c)$-inverses, and then for $(b,c)$-inverses in Section \ref{sec 4}. More precisely, we show that the absorption law holds, namely, $x_1+x_2=x_1(a_1+a_2)x_2$, where $x_1,x_2$ are taken to be the same as above, if $b_1=b_2$ and $c_1=c_2$ (see Theorem \ref{thm_adsorption_ann}). Also, we present equivalent conditions for the reverse order law to hold, namely, $(a_1a_2)^{\circ(b_2,c_1)}=a_2^{\circ(b_2,c_2)}a_1^{\circ(b_1,c_1)}$ (see Theorem \ref{thm_reverse_ann}). As corollaries, we propose some general, yet useful results (see Theorems \ref{thm_suff_reverse_ann} and \ref{thm_suff_reverse}). Finally, we close the paper with a discussion on Cline's formula.

\section{General results}\label{sec 2}

In this section, we obtain a number of properties of one-sided annihilator $(b,c)$-inverses. We first show the uniqueness of annihilator $(b,c)$-inverses.

\begin{thm}\label{thm_unique_ann}
	Let $a,b,c\in R$. Then $a$ has at most one ann-$(b,c)$-inverse.
\end{thm}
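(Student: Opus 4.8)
The plan is to suppose $x$ and $x'$ are both ann-$(b,c)$-inverses of $a$ and show $x=x'$ by exploiting the inner/outer structure together with the annihilator containments. The key observation is that condition (\ref{eq_def_ann}.4), namely $\prescript{\circ}{}{b}\subseteq\prescript{\circ}{}{x}$, lets me transfer any left-annihilation of $b$ onto $x$, and dually condition (\ref{eq_def_ann}.5) transfers right-annihilation of $c$ onto $x$. So my first move is to manufacture, for each of $x$ and $x'$, an element that left-annihilates $b$ but is built out of the other inverse, and similarly on the right.

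Concretely, I would first show the difference $x'ax - x$ lies in $x^\circ$-type or $\prescript{\circ}{}{b}$-type relations. Consider $1 - x'a$ acting on $b$: since $x'ab = b$ by (\ref{eq_def_ann}.2) for $x'$, we get $(1-x'a)b = 0$, so $1 - x'a \in \prescript{\circ}{}{b}$ (working in $R^1$). By (\ref{eq_def_ann}.4) applied to $x$ this forces $(1-x'a)x = 0$, i.e. $x = x'ax$. Dually, from $cax = c$ (condition (\ref{eq_def_ann}.3) for $x$) we get $c(1 - ax) = 0$, so $1 - ax \in c^\circ$, and (\ref{eq_def_ann}.5) applied to $x'$ gives $x'(1-ax) = 0$, i.e. $x' = x'ax$. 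Combining, $x = x'ax = x'$. The symmetric bookkeeping — being careful which inverse supplies each equation and each containment — is the only thing to watch.

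The \textbf{main subtlety} is that $R$ need not be unital, so I cannot literally write $1-x'a$ as a ring element. I would handle this by working in $R^1$, the monoid adjoined in the excerpt, and checking that the annihilator conditions, which are stated over $R$, still give the required conclusions: the elements $1-x'a$ and $1-ax$ live in $R^1$, but one should verify $(1-x'a)b=0$ and $c(1-ax)=0$ are genuine equalities in $R$, and that $\prescript{\circ}{}{b}\subseteq\prescript{\circ}{}{x}$ read appropriately (or its $R^1$-analogue) still applies to these $R^1$-elements. An alternative that avoids $R^1$ entirely is to use the outer condition (\ref{eq_def_ann}.1), $xax=x$, directly: from $x = xax$ and $x' = x'ax'$ I would chain $x = xax = x(ax'a)x$ after first establishing $ax'a = a$ or the intermediate identities $xab=b,\ cax=c$ to bridge $x$ and $x'$ through $b$ and $c$. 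I expect the unital shortcut to be cleanest, with the non-unital case requiring the short verification above; that verification is where I anticipate the real (though modest) work, since everything else is formal manipulation of the defining equations.
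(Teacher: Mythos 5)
Your unital-case argument is correct and is essentially the paper's argument in disguise, but for the theorem as stated ($R$ an arbitrary, possibly non-unital, ring) the step you defer is not a routine verification — it is the whole content, and neither of your proposed ways around it works as written. The containments $\prescript{\circ}{}{b}\subseteq\prescript{\circ}{}{x}$ and $c^\circ\subseteq x^\circ$ in Definition \ref{def_ann} are containments of subsets of $R$; they say nothing about elements of $R^1\setminus R$ such as $1-x'a$, and there is no reason the "$R^1$-analogue" should hold (if it did automatically, condition (\ref{eq_def_ann}.1) would be redundant in general, whereas the paper includes it precisely to handle the non-unital case — compare Examples \ref{eg_sided_ann_1} and \ref{eg_sided_ann_2}, where the annihilator conditions genuinely fail to control such formal differences). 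Your fallback route through $ax'a=a$ is also not available: an ann-$(b,c)$-inverse is an \emph{outer} inverse ($xax=x$), not an inner one, so $ax'a=a$ need not hold.

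The correct repair — and it is exactly what the paper does — is to use $xax=x$ to replace the illegitimate element $1-x'a$ by the legitimate one $(x-x')a\in R$: from $xab=b=x'ab$ one gets $(x-x')ab=0$, so $(x-x')a\in\prescript{\circ}{}{b}\subseteq\prescript{\circ}{}{x}$, hence $(x-x')ax=0$, i.e. $x=xax=x'ax$; dually, $ca(x-x')=0$ and $c^\circ\subseteq (x')^\circ$ give $x'=x'ax'=x'ax$, whence $x=x'ax=x'$. So your instinct about where the work lies is right, and your dual bookkeeping (one containment per side, each applied to the other inverse) is the correct skeleton; you just need to route the cancellation through $xax=x$ rather than through $1$ or through a false inner-inverse identity.
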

\begin{proof}
	Suppose that $x,y\in R$ both satisfy the condition \eqref{eq_def_ann}. From $xab=b=yab$, we have $(x-y)ab=0$. Also, $\prescript{\circ}{}{(ab)}\subseteq\prescript{\circ}{}{(ay)}$ since $\prescript{\circ}{}{b}\subseteq\prescript{\circ}{}{y}$. This yields $(x-y)ay=0$. Hence $y=yay=xay$. Similarly, we get $x=xax=xay$. Then $x=xay=y$. Thus $a$ has at most one ann-$(b,c)$-inverse.
\end{proof}

\begin{cor}\label{cor_relation}
	Let $R$ be a $^*$-ring, and let $a\in R$. Then $a$ is Moore--Penrose invertible if and only if $a$ is ann-$(a^*,a^*)$-invertible. In this case, $a^\dagger$ coincides with $a^{\circ(a^*,a^*)}$.
\end{cor}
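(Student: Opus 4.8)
The plan is to sandwich the annihilator $(a^{*},a^{*})$-inverse between the ordinary $(a^{*},a^{*})$-inverse and the Moore--Penrose inverse, using Lemma~\ref{lem_relation}(i) for one direction and a direct Penrose-equation computation for the other. I would begin with a general observation that requires no involution: if $x$ is a $(b,c)$-inverse of an element, then by \eqref{eq_def_useful} we may write $x=bs$ and $x=tc$ for suitable $s,t\in R^{1}$, so that $\prescript{\circ}{}{b}\subseteq\prescript{\circ}{}{x}$ and $c^{\circ}\subseteq x^{\circ}$; since $xab=b$, $cax=c$ and $xax=x$ also hold, $x$ is automatically an ann-$(b,c)$-inverse. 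Thus every $(b,c)$-inverse is the ann-$(b,c)$-inverse, for arbitrary $b,c$.

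For the ``only if'' part, suppose $a$ is Moore--Penrose invertible. Lemma~\ref{lem_relation}(i) makes $a$ $(a^{*},a^{*})$-invertible with $a^{(a^{*},a^{*})}=a^{\dagger}$, and the observation above then exhibits $a^{\dagger}$ as an ann-$(a^{*},a^{*})$-inverse of $a$; by the uniqueness in Theorem~\ref{thm_unique_ann} we conclude $a^{\circ(a^{*},a^{*})}=a^{\dagger}$.

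The real content is the converse, which I would establish by verifying the four equations in \eqref{eq_def_mp} directly, and this is where the involution does the work. Let $x$ be the ann-$(a^{*},a^{*})$-inverse, so in particular $xax=x$, $xaa^{*}=a^{*}$, and $a^{*}ax=a^{*}$. Setting $p=ax$ and $q=xa$, the relation $xax=x$ makes $p,q$ idempotent. Reading $a^{*}ax=a^{*}$ as $a^{*}p=a^{*}$ and applying $^{*}$ gives $p^{*}a=a$; multiplying on the right by $x$ yields $p^{*}p=p$. Since $p^{*}p$ is self-adjoint, applying $^{*}$ to $p=p^{*}p$ forces $p^{*}=(p^{*}p)^{*}=p^{*}p=p$, that is $(ax)^{*}=ax$. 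The mirror computation, starting from $xaa^{*}=a^{*}$, i.e.\ $qa^{*}=a^{*}$, produces $aq^{*}=a$, then $qq^{*}=q$, and finally $(xa)^{*}=xa$. Combining $p^{*}a=a$ with $p^{*}=p$ gives $axa=a$, while $xax=x$ is at hand; hence all of \eqref{eq_def_mp} hold, $x=a^{\dagger}$, and $a^{\circ(a^{*},a^{*})}=a^{\dagger}$.

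The main obstacle is precisely extracting the Hermitian relations $(ax)^{*}=ax$ and $(xa)^{*}=xa$ from hypotheses that only feature $a^{*}$ on a single side; the device that unlocks this is the idempotent identity $p^{*}p=p$, after which self-adjointness of $p^{*}p$ is automatic. I would also remark that this converse uses only the equational conditions (\ref{eq_def_ann}.1)--(\ref{eq_def_ann}.3): in the symmetric case $b=c=a^{*}$ the annihilator inclusions (\ref{eq_def_ann}.4)--(\ref{eq_def_ann}.5) become redundant, which is what collapses the distinction between the two notions of inverse in this setting.
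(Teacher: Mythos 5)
Your proof is correct and follows essentially the same route as the paper: the forward direction combines Lemma~\ref{lem_relation}(i) with the observation that a $(b,c)$-inverse is automatically an ann-$(b,c)$-inverse and then invokes the uniqueness from Theorem~\ref{thm_unique_ann}, while the converse verifies the Penrose equations \eqref{eq_def_mp} directly from $xax=x$, $xaa^*=a^*$, and $a^*ax=a^*$. The only difference is that you spell out, via the idempotent identity $p=p^*p$, the equivalence that the paper dismisses as ``easy to verify.''
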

\begin{proof}
	Suppose that $a$ is Moore--Penrose invertible. From Lemma \ref{lem_relation}, $a$ is $(a^*,a^*)$-invertible, and hence is ann-$(a^*,a^*)$-invertible. By Theorem \ref{thm_unique_ann}, $a^{\circ(a^*,a^*)}=a^{(a^*,a^*)}=a^\dagger$.
	
	Conversely, suppose that $a$ is ann-$(a^*,a^*)$-invertible. It is easy to verify that (\ref{eq_def_ann}.2), i.e., $xaa^*=a^*$, is equivalent to (\ref{eq_def_mp}.1) and (\ref{eq_def_mp}.4), i.e., $axa=a$ and $(xa)^*=xa$. Similarly, (\ref{eq_def_ann}.3) is equivalent to (\ref{eq_def_mp}.1) and (\ref{eq_def_mp}.3). The condition (\ref{eq_def_mp}.2) is obvious. Thus, $a$ is Moore--Penrose invertible with $a^\dagger=a^{\circ(a^*,a^*)}$.
\end{proof}

We now give the definition of one-sided annihilator $(b,c)$-inverses.

\begin{mdef}\label{def_sided_ann}
	Let $a,b,c\in R$. We call $x\in R$ a left annihilator $(b,c)$-inverse (lann-$(b,c)$-inverse for short) of $a$, if $x$ satisfies
	\begin{equation}\label{eq_def_lann}
	xab=b,\quad
	c^\circ\subseteq x^\circ.
	\end{equation}
	Dually, we call $y\in R$ a right annihilator $(b,c)$-inverse (rann-$(b,c)$-inverse for short) of $a$, if $y$ satisfies
	\begin{equation}\label{eq_def_rann}
	cay=c,\quad
	\prescript{\circ}{}{b}\subseteq\prescript{\circ}{}{y}.
	\end{equation}
\end{mdef}

Actually, a similarly definition appears to have been obtained by Ke, Vi\v{s}nji\'{c}, and Chen \cite[Definition 2.3]{ke_one-sided_2016}. It should be pointed out that left (resp. right) annihilator $(b,c)$-inverses given above by right (resp. left) annihilator in order to keep the names consistent with Drazin's definition of one-sided $(b,c)$-inverses (see \cite[Definition 1.2]{drazin_left_2016}).

\begin{mdef}[cf. {\cite[Definition 2.2]{drazin_left_2016}}]
	Let $a,b,c\in R$. For any given left (or right) ann-$(b,c)$-inverse $x\in R$ of $a$, we call $x$ is \textit{regular} if $x$ satisfies $xax=x$.
\end{mdef}

Note, this is a different definition from that of \textit{von Neumann regular}.

\begin{thm}\label{thm_sided_ann}
	Let $a,b,c\in R$, and suppose that $a$ is both left and right ann-$(b,c)$-invertible with a lann-$(b,c)$-inverse $x_l$ and a rann-$(b,c)$-inverse $x_r$. Then $a$ is ann-$(b,c)$-invertible with the ann-$(b,c)$-inverse $x_lax_r$.
\end{thm}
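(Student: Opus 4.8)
The plan is to set $x:=x_l a x_r$ and verify, one by one, the five defining conditions (\ref{eq_def_ann}.1)--(\ref{eq_def_ann}.5) of an ann-$(b,c)$-inverse for this $x$; its being \emph{the} inverse then follows from the uniqueness statement of Theorem \ref{thm_unique_ann}. The four hypotheses at my disposal are $x_l a b=b$ and $c^\circ\subseteq x_l^\circ$ (from \eqref{eq_def_lann}), together with $cax_r=c$ and $\prescript{\circ}{}{b}\subseteq\prescript{\circ}{}{x_r}$ (from \eqref{eq_def_rann}). Since these come in a left/right symmetric pair, I expect each condition and its mirror image to be settled by dual arguments, so in practice I only need to carry out one of each pair.

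For the two intertwining identities (\ref{eq_def_ann}.2)--(\ref{eq_def_ann}.3) I would first prove $cax=c$. Left-multiplying $x_l a b=b$ by $ca$ gives $(cax_l a-ca)b=0$, i.e. $cax_l a-ca\in\prescript{\circ}{}{b}\subseteq\prescript{\circ}{}{x_r}$; multiplying on the right by $x_r$ and using $cax_r=c$ then yields $cax=cax_l a x_r=cax_r=c$. The dual computation, multiplying $cax_r=c$ on the right by $ab$ and pushing the resulting membership through $c^\circ\subseteq x_l^\circ$, gives $xab=b$.

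For the annihilator inclusions (\ref{eq_def_ann}.4)--(\ref{eq_def_ann}.5) the obstacle is that the hypotheses control $\prescript{\circ}{}{x_r}$ and $x_l^\circ$, which sit at the ``wrong'' ends of the product $x=x_l a x_r$, so a single application of an inclusion cannot reach across. I would resolve this by a two-step cross-over: to prove (\ref{eq_def_ann}.4), take $zb=0$; then $z x_l a b=zb=0$, so $z x_l a\in\prescript{\circ}{}{b}\subseteq\prescript{\circ}{}{x_r}$, whence $zx=z x_l a x_r=0$. Condition (\ref{eq_def_ann}.5) follows dually, using $cax_r=c$ to produce a member of $c^\circ$ and then $c^\circ\subseteq x_l^\circ$.

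The main obstacle is the outer condition (\ref{eq_def_ann}.1), $xax=x$. Were $R$ unital it would be automatic, since $(1-xa)b=b-xab=0$ forces $1-xa\in\prescript{\circ}{}{b}\subseteq\prescript{\circ}{}{x}$ and hence $x-xax=0$; but without an identity this shortcut is unavailable and I must exploit the explicit form of $x$. The idea is to write $xax-x=x_l a\,u$ with $u:=x_r a x_l a x_r-x_r$, and then annihilate $u$ rather than cancel $ax_l a$ to an identity. Concretely $cau=cax_r a x_l a x_r-cax_r=cax_l a x_r-c$, which vanishes because $cax_r=c$ and, as already shown, $cax_l a x_r=cax=c$. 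Thus $au\in c^\circ\subseteq x_l^\circ$, giving $x_l a\,u=0$ and therefore $xax=x$. With all five conditions in hand, $x=x_l a x_r$ is the ann-$(b,c)$-inverse of $a$, unique by Theorem \ref{thm_unique_ann}.
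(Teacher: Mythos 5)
Your proof is correct and follows essentially the same route as the paper: verify the five conditions of Definition \ref{def_ann} for $x_lax_r$ by repeatedly landing differences in $\prescript{\circ}{}{b}$ or $c^\circ$ and pushing them through the hypothesized inclusions into $\prescript{\circ}{}{x_r}$ or $x_l^\circ$. The only (immaterial) difference is bookkeeping: the paper derives $xax=x$ directly from $cax_rax_lab=cab$, whereas you first establish $cax=c$ and then reuse it via the factorization $xax-x=x_la(x_rax_lax_r-x_r)$.
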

\begin{proof}
	Since $x_lab=b$ and $cax_r=c$, we conclude that $cax_rax_lab=cab$. Using $c^\circ\subseteq x_l^\circ$ and $\prescript{\circ}{}{b}\subseteq\prescript{\circ}{}{x_r}$, we see that $(x_lax_r)a(x_lax_r)=x_lax_r$. Next, from $x_lab=b$, we have $h(x_lab)=hb$ for all $h\in R$. By $\prescript{\circ}{}{b}\subseteq\prescript{\circ}{}{x_r}$, we get $h(x_lax_r)=hx_r$ for all $h\in R$. (Note, this is a technique that we will use over and over again.) Thus $\prescript{\circ}{}{b}\subseteq\prescript{\circ}{}{x_r}=\prescript{\circ}{}{(x_lax_r)}$. Then by taking $h=ca$, we get $ca(x_lax_r)=cax_r=c$. Similarly, we obtain $c^\circ\subseteq x_l^\circ=(x_lax_r)^\circ$ and $(x_lax_r)ab=x_lab=b$.
\end{proof}

This is noteworthy, because Drazin in \cite[Theorem 2.1]{drazin_left_2016} has proved that if $a\in S$ is both left and right $(b,c)$-invertible for given $b,c\in S$ then $a$ is $(b,c)$-invertible and its left and right $(b,c)$-inverse are unique (and are both equal to $a^{(b,c)}$). Recall Corollary \ref{cor_relation}. Theorem \ref{thm_sided_ann} may also remind many readers of a commonly known property of Moore--Penrose inverses,
\begin{equation}\label{eq_mp}
a^\dagger=a^{\{1,4\}}aa^{\{1,3\}},
\end{equation}
where $a$ is a Moore--Penrose invertible element in a $^*$-ring $R$, and $a^{\{1,4\}}\in R$ (resp. $a^{\{1,3\}}\in R$) is a solution satisfying (\ref{eq_def_mp}.1) and (\ref{eq_def_mp}.4) (resp. (\ref{eq_def_mp}.3)). However, we would like to emphasize that we could not obtain \eqref{eq_mp} by slightly modifying Theorem \ref{thm_sided_ann}, because one-sided annihilator $(a^*,a^*)$-inverses of $a$ is ``stronger'' than $a^{\{1,4\}}$ and $a^{\{1,3\}}$, i.e., left annihilator $(a^*,a^*)$-inverses of $a$ have to satisfy both conditions $xaa^*=a^*$ and $(a^*)^\circ\subseteq x^\circ$ while $a^{\{1,4\}}$ only requires $xaa^*=a^*$. For example, take $2\times2$ complex matrix $a=\begin{pmatrix}1&0\\0&0\end{pmatrix}$. Then there exits $x=\begin{pmatrix}1&0\\0&1\end{pmatrix}$ such that $xaa^*=a^*$ but not satisfy $(a^*)^\circ\subseteq x^\circ$.

To distinguish Theorem \ref{thm_sided_ann} from Drazin's result, we construct the following examples which are inspired by Johnson's work \cite{johnson_structure_1957}.

\begin{eg}\label{eg_sided_ann_1}
	Let $\mathscr{M}_3(\mathbb{Z}_2)$ be the ring of all $3\times3$ matrices with entries from the prime field of order 2, and let $R:=\{\alpha e_{11}+\beta e_{21}+\gamma e_{22}+\delta e_{31}:\alpha,\beta,\gamma,\delta\in\mathbb{Z}_2\}$ be a subring of $\mathscr{M}_3(\mathbb{Z}_2)$. Take $a=e_{11}+e_{22},b=c=e_{11}+e_{21}$. Then $x=e_{11}+e_{21}$ is the ann-$(b,c)$-inverse of $a$. Note that $y=e_{11}+e_{21}+e_{31}$ is a rann-$(b,c)$-inverse of $a$ and satisfies $y=yay$, but $y\neq x$.
\end{eg}

\begin{eg}\label{eg_sided_ann_2}
	Let $R$ be the same as above. Take $a=b=c=e_{22}$. Then $x=e_{22}$ is the ann-$(b,c)$-inverse of $a$. Note that $y=e_{22}+e_{31}$ is a rann-$(b,c)$-inverse of $a$, but $y\neq yay, y\neq x$.
\end{eg}

These examples also demonstrate that, for a given element $a\in R$, its right annihilator $(b,c)$-inverse, if exists, may be not unique (and even not regular when $a$ is annihilator $(b,c)$-invertible) in general. Accordingly, we may of course be interested in the relation between annihilator $(b,c)$-inverses and one-sided ones.

\begin{prop}\label{prop_sided_ann_1}
	Let $a,b,c\in R$, and suppose that $a$ is ann-$(b,c)$-invertible with a lann-$(b,c)$-inverse $x_l$ and a rann-$(b,c)$-inverse $x_r$. Then $a^{\circ(b,c)}=x_l$ if and only if $x_l\in Rx_r$. (And dually for $x_r$.)
\end{prop}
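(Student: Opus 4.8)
The plan is to reduce everything to the factorization $a^{\circ(b,c)}=x_lax_r$ and then exploit its regularity. Set $w:=a^{\circ(b,c)}$. Since $a$ is ann-$(b,c)$-invertible and also carries the lann-$(b,c)$-inverse $x_l$ and the rann-$(b,c)$-inverse $x_r$, Theorem~\ref{thm_sided_ann} exhibits $x_lax_r$ as an ann-$(b,c)$-inverse of $a$, and the uniqueness in Theorem~\ref{thm_unique_ann} forces $w=x_lax_r$; in particular $waw=w$. The forward implication is then immediate: if $w=x_l$, then $x_l=x_lax_r=(x_la)x_r\in Rx_r$ because $x_la\in R$.

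For the converse I would assume $x_l=sx_r$ with $s\in R$ and prove $w=x_l$. First I would record two ``annihilator transfer'' identities, using the device repeatedly invoked in the proof of Theorem~\ref{thm_sided_ann}. From $cax_r=c$ and $c^\circ\subseteq x_l^\circ$ one gets $ax_rk-k\in c^\circ\subseteq x_l^\circ$ for every $k\in R$, hence
\[
wk=x_lax_rk=x_lk\qquad(\forall\,k\in R).
\]
Dually, from $x_lab=b$ and $\prescript{\circ}{}{b}\subseteq\prescript{\circ}{}{x_r}$ one gets $hx_la-h\in\prescript{\circ}{}{b}\subseteq\prescript{\circ}{}{x_r}$ for every $h\in R$, hence
\[
hw=hx_lax_r=hx_r\qquad(\forall\,h\in R).
\]
Now I would feed in the right multipliers. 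Taking $k=aw$ in the first identity and using $waw=w$ gives $w=x_law$; taking $h=s$ in the second identity and using $x_l=sx_r$ gives $sw=sx_r=x_l$. Substituting,
\[
w=x_law=(sw)aw=s(waw)=sw=x_l,
\]
which is exactly $a^{\circ(b,c)}=x_l$. The dual statement for $x_r$ follows by the mirror-image argument.

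The step I expect to be the main obstacle is conceptual rather than computational. The first transfer identity already shows $w-x_l\in\prescript{\circ}{}{R}$, i.e.\ $w-x_l$ annihilates $R$ on the left; had $R$ a unit this would instantly give $w=x_l$, but in the non-unital setting $\prescript{\circ}{}{R}$ may be nonzero, and the hypothesis $x_l\in Rx_r$ is precisely what rules out this degeneracy. The mechanism that uses it is to avoid any global cancellation and instead route everything through the regularity identity $waw=w$ together with the specific multipliers $k=aw$ and $h=s$, converting the annihilator information into the concrete chain above. Checking that each intermediate product stays inside $R$ is routine and keeps the argument valid without a unit.
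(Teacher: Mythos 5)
Your proof is correct, and it takes a genuinely different (and somewhat leaner) route than the paper's. For the converse, the paper writes $x_l=tx_r$, verifies by a chain of identities that $tx_l$ is itself a lann-$(b,c)$-inverse of $a$ (checking $(tx_l)ab=b$ and $c^\circ\subseteq(tx_l)^\circ$), and then invokes Theorem~\ref{thm_sided_ann} a second time to conclude $a^{\circ(b,c)}=(tx_l)ax_r=tx_r=x_l$. You instead isolate the two annihilator-transfer identities $wk=x_lk$ and $hw=hx_r$ (for all $k,h\in R$, with $w=x_lax_r$), and close the argument directly via the regularity $waw=w$: specializing $k=aw$ gives $w=x_law$, specializing $h=s$ gives $sw=x_l$, and then $w=x_law=(sw)aw=s(waw)=sw=x_l$. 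Both arguments rest on the same transfer device that the paper flags in the proof of Theorem~\ref{thm_sided_ann}; your version avoids constructing and certifying the auxiliary left inverse $tx_l$ and the second appeal to Theorem~\ref{thm_sided_ann}, at the cost of carrying the explicit regularity identity. Your closing observation that the first transfer identity alone yields $w-x_l\in\prescript{\circ}{}{R}$, so that the hypothesis $x_l\in Rx_r$ is only needed to defeat the non-unital degeneracy, is exactly the point the paper itself exploits later in Lemma~\ref{lem_prop_sided_ann_2} and Proposition~\ref{prop_sided_ann_3}, so it is a faithful diagnosis of where the real content lies.
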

\begin{proof}
	Suppose that $x_l\in Rx_r$. Then there exist some $t\in R$ such that $tx_r=x_l$. From $x_lab=b$ and $\prescript{\circ}{}{b}\subseteq\prescript{\circ}{}{x_r}$, we have $tx_lax_r=tx_r=x_l$. Therefore,
	\[
	b=x_lab=\left(tx_lax_r\right)ab=\left(ta^{\circ(b,c)}\right)ab=t\left(a^{\circ(b,c)}ab\right)=tb=t\left(x_lab\right)=\left(tx_l\right)ab.
	\]
	On the other hand, $c^\circ\subseteq x_l^\circ\subseteq (tx_l)^\circ$, hence $tx_l$ is a lann-$(b,c)$-inverse of $a$. By Theorem \ref{thm_sided_ann}, $a^{\circ(b,c)}=(tx_l)ax_r=x_l$.
	
	Conversely, suppose that $a^{\circ(b,c)}=x_l$. We then have $x_l=a^{\circ(b,c)}=(x_la)x_r\in Rx_r$.
\end{proof}

Clearly, if $x_l\in Rc(\subseteq Rx_r)$, then $a^{\circ(b,c)}=x_l$. Therefore, Proposition \ref{prop_sided_ann_1} also shows that a $(b,c)$-invertible element in rings has a unique left (or right) $(b,c)$-inverse.

\begin{cor}\label{cor_sided_ann}
	Let $a,b,c\in R$. If there exist some $x\in R$ such that $xab=b,cax=c,\prescript{\circ}{}{b}\subseteq\prescript{\circ}{}{x}$, and $c^\circ\subseteq x^\circ$, then $a$ is ann-$(b,c)$-invertible with the ann-$(b,c)$-inverse $xax$. In this case, $xax=x$ if and only if $x\in Rx$ (or equivalently, $x\in xR$).
\end{cor}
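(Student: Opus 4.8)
The plan is to reduce everything to the results already established, chiefly Theorem 2.3 and Proposition 2.9, so that no fresh machinery is needed. The hypotheses say precisely that $x$ is simultaneously a lann-$(b,c)$-inverse (it satisfies $xab=b$ and $c^\circ\subseteq x^\circ$) and a rann-$(b,c)$-inverse (it satisfies $cax=c$ and $\prescript{\circ}{}{b}\subseteq\prescript{\circ}{}{x}$) of $a$. So I would first record this observation explicitly, taking $x_l=x$ and $x_r=x$. Theorem 2.3 then applies verbatim with $x_l=x_r=x$ and immediately yields that $a$ is ann-$(b,c)$-invertible with ann-$(b,c)$-inverse $x_lax_r=xax$. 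That disposes of the first assertion with essentially no computation.

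For the second assertion I would invoke Proposition 2.9 with the same identification $x_l=x_r=x$. The proposition states that $a^{\circ(b,c)}=x_l$ if and only if $x_l\in Rx_r$; substituting gives $xax=x$ if and only if $x\in Rx$, since $a^{\circ(b,c)}=xax$ by the first part and $x_l=x$. This already gives one of the two claimed equivalences for free.

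The remaining task—and the only place requiring an actual argument—is the parenthetical equivalence $x\in Rx\iff x\in xR$. Neither inclusion is obviously symmetric, so I expect this to be the main (indeed the only) obstacle. The clean way to handle it is to show each side is equivalent to the common statement $xax=x$, using the left--right dual of Proposition 2.9. Concretely, Proposition 2.9 as stated gives $xax=x\iff x\in Rx$; the dual version (for $x_r$, promised by the phrase ``And dually for $x_r$'' in that proposition) gives $xax=x\iff x\in xR$. Chaining these two equivalences through the pivot $xax=x$ yields $x\in Rx\iff x\in xR$, completing the proof. Thus the whole corollary follows by specializing two earlier results to the diagonal case $x_l=x_r=x$ and reading off the consequences; I would phrase the write-up so as to make clear that it is this specialization, rather than any new computation, that does the work.
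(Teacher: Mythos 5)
Your proposal is correct and matches the paper's intended derivation: the corollary is stated without proof precisely because it is the specialization $x_l=x_r=x$ of Theorem~\ref{thm_sided_ann} (giving $a^{\circ(b,c)}=xax$) and of Proposition~\ref{prop_sided_ann_1} together with its left--right dual (giving $xax=x\iff x\in Rx\iff x\in xR$). Your note that the two membership conditions are linked only through the common pivot $xax=x$ is exactly the right reading of the parenthetical ``or equivalently.''
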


\begin{prop}\label{prop_sided_ann_2}
	Let $a,b,c\in R$, and suppose that $a$ is ann-$(b,c)$-invertible with a lann-$(b,c)$-inverse $x_l$. Then $a^{\circ(a,b)}=x_lax_l$ if and only if $cax_l=c$. (And dually for rann-$(b,c)$-inverse.)
\end{prop}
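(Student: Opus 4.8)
The plan is to avoid verifying the five defining conditions of \eqref{eq_def_ann} for $x_lax_l$ directly and instead to invoke the uniqueness of annihilator $(b,c)$-inverses (Theorem \ref{thm_unique_ann}): writing $w:=a^{\circ(b,c)}$ for the (already existing) annihilator $(b,c)$-inverse of $a$, I would prove the equivalence by showing that $cax_l=c$ is exactly the condition under which $x_lax_l$ collapses onto $w$. A direct check of \eqref{eq_def_ann} stalls at conditions (\ref{eq_def_ann}.1) and (\ref{eq_def_ann}.4), which do not follow from the lann-hypotheses of \eqref{eq_def_lann} together with $cax_l=c$ in any transparent way; routing everything through the single equality $x_lax_l=w$ is the key idea, and recognizing that one should prove this equality (rather than the five conditions) is what I expect to be the main obstacle.

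The first step is a bridging identity, $x_law=w$, that holds under the standing hypotheses regardless of the extra condition. Since $x_lab=b$ and $wab=b$, we have $(x_la-wa)b=0$, so $x_la-wa\in\prescript{\circ}{}{b}$; as $w$ satisfies (\ref{eq_def_ann}.4), the inclusion $\prescript{\circ}{}{b}\subseteq\prescript{\circ}{}{w}$ forces $(x_la-wa)w=0$, and then $waw=w$ from (\ref{eq_def_ann}.1) gives $x_law=waw=w$. This is an instance of the same annihilator technique already used in the proof of Theorem \ref{thm_sided_ann}.

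For sufficiency, assume $cax_l=c$. Together with $caw=c$ from (\ref{eq_def_ann}.3) this gives $ca(x_l-w)=0$, so $a(x_l-w)\in c^\circ\subseteq x_l^\circ$ by \eqref{eq_def_lann}, whence $x_la(x_l-w)=0$, i.e. $x_lax_l=x_law$. Combining this with the bridging identity yields $x_lax_l=x_law=w=a^{\circ(b,c)}$, as desired.

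For necessity, assume $a^{\circ(b,c)}=x_lax_l$, i.e. $w=x_lax_l$. Feeding this into (\ref{eq_def_ann}.3) gives $cax_lax_l=c$, and feeding it into the bridging identity $x_law=w$ gives $x_lax_lax_l=x_lax_l$. Left-multiplying the latter by $ca$ and using the former to collapse the leading factor $cax_lax_l$ to $c$ on both sides, I obtain $cax_l=c$. Finally, the parenthetical dual assertion follows by the mirror-image argument: for a rann-$(b,c)$-inverse $x_r$ one interchanges the roles of the left and right annihilators and of (\ref{eq_def_ann}.2), (\ref{eq_def_ann}.3), so that the equality $a^{\circ(b,c)}=x_rax_r$ becomes equivalent to the remaining condition $x_rab=b$.
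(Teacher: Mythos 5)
Your proof is correct, and it takes a genuinely different route from the paper's. The paper first establishes a separate lemma (Lemma \ref{lem_prop_sided_ann_2}) showing $\prescript{\circ}{}{b}\subseteq\prescript{\circ}{}{(x_lax_l)}$ and $x_lax_lax_l=x_lax_l$ by decomposing $x_l=a^{\circ(b,c)}+\varepsilon$ with $\varepsilon\in\prescript{\circ}{}{R}$, and then, for sufficiency, verifies all five conditions of \eqref{eq_def_ann} for $x_lax_l$ one by one (so the direct check you describe as ``stalling'' is in fact how the authors proceed, at the cost of the lemma). You bypass both the lemma and the condition-by-condition verification: your bridging identity $x_law=w$, with $w=a^{\circ(b,c)}$, follows in one line from $(x_la-wa)b=0$ together with $\prescript{\circ}{}{b}\subseteq\prescript{\circ}{}{w}$ and $waw=w$, and then $cax_l=c=caw$ forces $a(x_l-w)\in c^\circ\subseteq x_l^\circ$, which collapses $x_lax_l$ to $x_law=w$. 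The necessity direction coincides, in substance, with the paper's computation $c=ca(x_lax_lax_l)=(cax_lax_l)ax_l=cax_l$. Your argument is shorter and cleaner for this particular proposition; what the paper's lemma buys in exchange is the explicit by-product $x_l-a^{\circ(b,c)}\in\prescript{\circ}{}{R}$, which the authors immediately reuse to obtain Proposition \ref{prop_sided_ann_3} on faithful rings, and which your route does not produce. (Incidentally, like the paper's own proof, you silently read the statement's $a^{\circ(a,b)}$ as the intended $a^{\circ(b,c)}$.)
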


In order to demonstrate proposition \ref{prop_sided_ann_2}, we begin by establishing the following lemma.

\begin{lem}\label{lem_prop_sided_ann_2}
	Let $a,b,c\in R$. If $a$ is ann-$(b,c)$-invertible with a lann-$(b,c)$-inverse $x_l$ then $\prescript{\circ}{}{b}\subseteq\prescript{\circ}{}{(x_lax_l)}$ and $x_lax_lax_l=x_lax_l$.
\end{lem}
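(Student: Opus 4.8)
The plan is to work with $x:=a^{\circ(b,c)}$, the ann-$(b,c)$-inverse whose existence is hypothesized, and to reduce both assertions to the single identity $x_lax_l=xax_l$. First I would set up the right-hand analogue of the annihilator technique used in the proof of Theorem~\ref{thm_sided_ann}: from $cax=c$ one has $c(axh-h)=0$, so $axh-h\in c^\circ$ for every $h\in R$, and then $c^\circ\subseteq x_l^\circ$ forces
\[
x_laxh=x_lh\qquad(h\in R),
\]
which I shall call the right identity.

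Next I would observe that $x_l$ and $x$ are \emph{both} left solutions, $x_lab=b=xab$, so $(x_l-x)ab=0$, i.e. $(x_l-x)a\in\prescript{\circ}{}{b}\subseteq\prescript{\circ}{}{x}$; multiplying on the right by $x$ and invoking $xax=x$ yields the key left identity $x_lax=x$. Feeding $h=ax_l$ into the right identity and then replacing $x_lax$ by $x$ gives
\[
x_lax_l=x_laxax_l=xax_l,
\]
the identity advertised above.

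Both claims now drop out. For the inclusion, any $w\in\prescript{\circ}{}{b}$ satisfies $wx=0$ because $\prescript{\circ}{}{b}\subseteq\prescript{\circ}{}{x}$, so $w(x_lax_l)=w(xax_l)=(wx)ax_l=0$, whence $\prescript{\circ}{}{b}\subseteq\prescript{\circ}{}{(x_lax_l)}$. For the regularity identity I would iterate the substitution $x_lax_l=xax_l$ and absorb the middle factor through $xax=x$:
\[
x_lax_lax_l=(xax_l)ax_l=xa(xax_l)=(xax)ax_l=xax_l=x_lax_l.
\]

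The hard part is conceptual rather than computational. Since $x_l$ is only required to satisfy $x_lab=b$ and $c^\circ\subseteq x_l^\circ$, it carries no left-annihilator information — the inclusion $\prescript{\circ}{}{b}\subseteq\prescript{\circ}{}{x_l}$ is \emph{not} available — so $x_lax_l$ cannot be attacked directly through $\prescript{\circ}{}{b}$. The essential move is to transport the missing data from the genuine inverse $x$, and the identity $x_lax_l=xax_l$, obtained by marrying the right-hand technique to the left identity $x_lax=x$, is precisely the bridge that makes this possible.
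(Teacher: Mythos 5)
Your proof is correct and follows essentially the same route as the paper: both arguments rest on the two identities $x_laxh=x_lh$ for all $h\in R$ (from $cax=c$ and $c^\circ\subseteq x_l^\circ$) and $x_lax=x$ (from $x_lab=b=xab$ and $\prescript{\circ}{}{b}\subseteq\prescript{\circ}{}{x}$), where $x=a^{\circ(b,c)}$. The only cosmetic difference is that the paper combines them into $x_l-x\in\prescript{\circ}{}{R}$ and expands $x_lax_l=(x+\varepsilon)a(x+\varepsilon)$, whereas you extract just the needed instance $x_lax_l=xax_l$ by taking $h=ax_l$; the remaining computations coincide.
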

\begin{proof}
	It is clear that $x_laa^{\circ(b,c)}h=x_lh$ for all $h\in R$, since $c^\circ\subseteq x_l^\circ$ and $caa^{\circ(b,c)}=c$. From $x_laa^{\circ(b,c)}=a^{\circ(b,c)}$ (note that $a^{\circ(b,c)}$ is also a rann-$(b,c)$-inverse of $a$), we get $a^{\circ(b,c)}h=x_lh$ for all $h\in R$. This yields $a^{\circ(b,c)}-x_l\in \prescript{\circ}{}{R}$. Now, we assume that $x_l=a^{\circ(b,c)}+\varepsilon$, where $\varepsilon\in\prescript{\circ}{}{R}$, i.e., $\varepsilon R=\{0\}$. We have $\prescript{\circ}{}{b}\subseteq\prescript{\circ}{}{(a^{\circ(b,c)})}\subseteq\prescript{\circ}{}{(x_lax_l)}$, since
	\[
	x_lax_l=\left(a^{\circ(b,c)}+\varepsilon\right)a\left(a^{\circ(b,c)}+\varepsilon\right)=a^{\circ(b,c)}+a^{\circ(b,c)}a\varepsilon.
	\]
	We also get
	\[
	x_lax_lax_l=x_la\left(a^{\circ(b,c)}+a^{\circ(b,c)}a\varepsilon\right)=a^{\circ(b,c)}+a^{\circ(b,c)}a\varepsilon=x_lax_l.\qedhere
	\]
\end{proof}

\begin{proof}[Proof of Proposition \ref{prop_sided_ann_2}]
	Suppose that $cax_l=c$. From Lemma \ref{lem_prop_sided_ann_2}, we see that $\prescript{\circ}{}{b}\subseteq\prescript{\circ}{}{(x_lax_l)}$ and $(x_lax_l)a(x_lax_l)=x_lax_lax_l=x_lax_l$. We also obtain that
	\begin{align*}
	&ca\left(x_lax_l\right)=\left(cax_l\right)ax_l=cax_l=c,\\
	&c^\circ\subseteq x_l^\circ\subseteq \left(x_lax_l\right)^\circ,\\
	&\left(x_lax_l\right)ab=x_la\left(x_lab\right)=x_lab=b.
	\end{align*}
	By Definition \ref{def_ann}, $x_lax_l$ is the ann-$(b,c)$-inverse of $a$.
	
	Conversely, suppose that $a^{\circ(a,b)}=x_lax_l$. From $x_lax_l=x_lax_lax_l$, we conclude that
	\[
	c=caa^{\circ(b,c)}=ca(x_lax_l)=ca(x_lax_lax_l)=(cax_lax_l)ax_l=(caa^{\circ(b,c)})ax_l=cax_l.\qedhere
	\]
\end{proof}

We observe that $cax_l=c$ is equivalent to $c\in Rx_l$ when $x_l$ is regular, i.e., $x_lax_l=x_l$. This leads to the following corollary (corresponding to Proposition \ref{prop_sided_ann_1}).

\begin{cor}
	Let $a,b,c\in R$, and suppose that $a$ is ann-$(b,c)$-invertible with a regular lann-$(b,c)$-inverse $x_l$. Then $a^{\circ(b,c)}=x_l$ if and only if $c\in Rx_l$. (And dually for regular rann-$(b,c)$-inverse.)
\end{cor}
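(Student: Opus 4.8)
The plan is to deduce this corollary directly from Proposition \ref{prop_sided_ann_2}, with the reduction powered entirely by the regularity hypothesis $x_lax_l=x_l$. The key observation is that, under regularity, the element $x_lax_l$ appearing in Proposition \ref{prop_sided_ann_2} collapses to $x_l$ itself. Thus Proposition \ref{prop_sided_ann_2} rewrites at once as: $a^{\circ(b,c)}=x_l$ if and only if $cax_l=c$. It therefore suffices to establish the purely algebraic equivalence $cax_l=c\Leftrightarrow c\in Rx_l$, and the corollary then follows by chaining the two equivalences.

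For that equivalence I would argue the two directions separately. If $cax_l=c$, then writing $c=(ca)x_l$ shows immediately that $c\in Rx_l$, and this direction uses nothing about regularity. Conversely, suppose $c\in Rx_l$, say $c=rx_l$ for some $r\in R$. Then
\[
cax_l=rx_lax_l=r\left(x_lax_l\right)=rx_l=c,
\]
where the middle equality is exactly the regularity of $x_l$. This confirms $cax_l=c$ and completes the equivalence, hence the forward implication of the corollary; the reverse implication is already immediate, since $a^{\circ(b,c)}=x_l=(x_la)x_l\in Rx_l$.

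The dual statement for a regular rann-$(b,c)$-inverse $x_r$ is handled by the left--right symmetric argument. One applies the dual half of Proposition \ref{prop_sided_ann_2} to reduce $a^{\circ(b,c)}=x_rax_r=x_r$ to the condition $x_rab=b$, and then verifies $x_rab=b\Leftrightarrow b\in x_rR$: the forward direction via $b=x_r(ab)$, and the converse via the regularity $x_rax_r=x_r$ after factoring $b=x_rs$.

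There is no serious obstacle here; the reduction is immediate once Proposition \ref{prop_sided_ann_2} and its dual are in hand. The only point demanding care is the converse direction of the equivalence $c\in Rx_l\Rightarrow cax_l=c$, which genuinely depends on regularity: after factoring $c=rx_l$, it is precisely the identity $x_lax_l=x_l$ that lets us absorb the extra factor $ax_l$. Without regularity this absorption fails, so the hypothesis that $x_l$ be regular is essential to the criterion rather than merely convenient.
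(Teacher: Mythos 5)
Your proposal is correct and follows exactly the route the paper intends: the corollary is obtained from Proposition \ref{prop_sided_ann_2} by noting that regularity collapses $x_lax_l$ to $x_l$ and makes $cax_l=c$ equivalent to $c\in Rx_l$ (the paper states this observation in the sentence immediately preceding the corollary). No gaps.
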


Again, in view of the proof of Lemma \ref{lem_prop_sided_ann_2} above. When $R$ is \textit{left faithful}, namely, $\prescript{\circ}{}{R}=\{0\}$, we get $x_l=a^{\circ(b,c)}$ (since $\varepsilon\in\prescript{\circ}{}{R}$ implies $\varepsilon=0$). Then we immediately have the following proposition.

\begin{prop}\label{prop_sided_ann_3}
Let $R$ be left (resp. right) faithful. Let $a,b,c\in R$, and suppose that $a$ is ann-$(b,c)$-invertible. Then $a$ has a unique left (resp. right) ann-$(b,c)$-inverse, which is equal to $a^{\circ(b,c)}$.
\end{prop}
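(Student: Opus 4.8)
The plan is to show directly that, under left faithfulness, every lann-$(b,c)$-inverse coincides with $a^{\circ(b,c)}$; uniqueness and the explicit value then follow at once, and the right-faithful case is handled by the dual argument. The whole argument hinges on the containment $a^{\circ(b,c)}-x_l\in\prescript{\circ}{}{R}$, which is already the heart of the proof of Lemma \ref{lem_prop_sided_ann_2}; I would either cite that proof or reproduce it in two short steps.

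First I would fix a lann-$(b,c)$-inverse $x_l$ (one exists, since $a^{\circ(b,c)}$ itself satisfies $a^{\circ(b,c)}ab=b$ and $c^\circ\subseteq(a^{\circ(b,c)})^\circ$) and abbreviate $x:=a^{\circ(b,c)}$. The first step compares $x_l$ and $x$ through $b$: from $x_lab=b=xab$ one gets $(x_l-x)ab=0$, hence $(x_l-x)a\in\prescript{\circ}{}{b}\subseteq\prescript{\circ}{}{x}$, and multiplying on the right by $x$ together with $xax=x$ yields $x_lax=x$. The second step compares them through $c$: from $cax=c$ one obtains $axh-h\in c^\circ\subseteq x_l^\circ$ for every $h\in R$, so $x_laxh=x_lh$. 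Combining gives $xh=x_laxh=x_lh$ for all $h$, i.e.\ $(x-x_l)h=0$ for every $h$, which is exactly $x-x_l\in\prescript{\circ}{}{R}$.

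At this point I would invoke the hypothesis: left faithfulness means $\prescript{\circ}{}{R}=\{0\}$, so $x_l=x=a^{\circ(b,c)}$. Since $x_l$ was an arbitrary lann-$(b,c)$-inverse, $a$ has exactly one, equal to $a^{\circ(b,c)}$. For the right-faithful statement I would run the mirror-image computation on a rann-$(b,c)$-inverse $y$ (using $cay=c$, $\prescript{\circ}{}{b}\subseteq\prescript{\circ}{}{y}$, the conditions $xab=b$ and $c^\circ\subseteq x^\circ$ on $x$, and $xax=x$), arriving at $h(x-y)=0$ for all $h$, that is $x-y\in R^\circ$; right faithfulness $R^\circ=\{0\}$ then forces $y=a^{\circ(b,c)}$.

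I do not anticipate any real obstacle: the substantive content is the membership $x-x_l\in\prescript{\circ}{}{R}$, which the proof of Lemma \ref{lem_prop_sided_ann_2} already supplies, so the proposition is essentially immediate once faithfulness is applied. The only thing demanding care is bookkeeping the annihilator inclusions in the right order---using $\prescript{\circ}{}{b}\subseteq\prescript{\circ}{}{x}$ for the comparison through $b$ and $c^\circ\subseteq x_l^\circ$ for the comparison through $c$---and making sure the dual case consistently replaces $\prescript{\circ}{}{R}$ by $R^\circ$ and swaps the sides of every annihilator.
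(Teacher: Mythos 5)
Your proposal is correct and follows essentially the same route as the paper: the key fact $a^{\circ(b,c)}-x_l\in\prescript{\circ}{}{R}$ is exactly what the proof of Lemma~\ref{lem_prop_sided_ann_2} establishes (your two steps are the paper's two steps, merely reordered), and the proposition then follows by applying faithfulness, with the dual computation for the right-sided case. No gaps.
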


In particular, if $R$ is cofaithful (or, more specifically, is unital) and $a\in R$ is both left and right annihilator $(b,c)$-invertible with a left annihilator $(b,c)$-inverse $x_l$ and a right annihilator $(b,c)$-inverse $x_r$, then $a$ is annihilator $(b,c)$-invertible with $a^{\circ(b,c)}=x_l=x_r$.

\begin{prop}\label{prop_sided_ann_4}
	Let $a,b,c\in R$, and suppose that $a$ is lann-$(b,c)$-invertible with a lann-$(b,c)$-inverse $x_l$. If $x_la$ is Drazin invertible, then $a$ has a regular lann-$(b,c)$-inverse. (And dually for rann-$(b,c)$-inverse.)
\end{prop}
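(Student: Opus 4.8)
The plan is to manufacture a regular lann-$(b,c)$-inverse directly out of $x_l$ and the Drazin inverse of $u:=x_la$. First I would note that the defining relation $x_lab=b$ says exactly $ub=b$, whence $u^kb=b$ for every $k\ge1$; in particular $u^mb=b$, where $m:=\operatorname{ind}(u)$. Writing $e:=uu^{\mathrm{D}}=u^{\mathrm{D}}u$ for the idempotent attached to the Drazin inverse (the two products agree by (\ref{eq_def_drazin}.3)), the candidate I propose is
\[
x:=u^{\mathrm{D}}x_l=(x_la)^{\mathrm{D}}x_l.
\]

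The heart of the matter is the identity $eb=b$, which I would get by letting the two halves of the Drazin structure interact with $u^mb=b$: since $eu^m=uu^{\mathrm{D}}u^m=u^{m+1}u^{\mathrm{D}}=u^m$ (commuting $u^{\mathrm{D}}$ past $u^m$ by (\ref{eq_def_drazin}.3), then applying (\ref{eq_def_drazin}.1)), we obtain $eb=e(u^mb)=(eu^m)b=u^mb=b$. I expect this to be the only genuinely delicate point: it is precisely here that the Drazin data of $u$ has to cooperate with $x_lab=b$, and the choice $u=x_la$ (rather than $ax_l$) is exactly what forces the powers $u^k$, and hence $e$, to fix $b$.

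With $eb=b$ secured, the remaining checks are routine. Because $xa=u^{\mathrm{D}}x_la=u^{\mathrm{D}}u=e$, I immediately have $xab=eb=b$, the left equation of \eqref{eq_def_lann}. For regularity I would compute $xax=(xa)x=ex=eu^{\mathrm{D}}x_l$ and use $eu^{\mathrm{D}}=u^{\mathrm{D}}uu^{\mathrm{D}}=u^{\mathrm{D}}u^{\mathrm{D}}u=(u^{\mathrm{D}})^2u=u^{\mathrm{D}}$, where the first reshuffle is the commutativity (\ref{eq_def_drazin}.3) and the last step is (\ref{eq_def_drazin}.2); this gives $xax=u^{\mathrm{D}}x_l=x$. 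The annihilator inclusion transfers for free: if $v\in c^\circ$ then $x_lv=0$ (as $c^\circ\subseteq x_l^\circ$), so $xv=u^{\mathrm{D}}x_lv=0$, i.e. $c^\circ\subseteq x_l^\circ\subseteq x^\circ$. Hence $x$ obeys \eqref{eq_def_lann} together with $xax=x$, so it is a regular lann-$(b,c)$-inverse of $a$.

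The dual statement is entirely symmetric: beginning from a rann-$(b,c)$-inverse $x_r$ with $cax_r=c$ and $\prescript{\circ}{}{b}\subseteq\prescript{\circ}{}{x_r}$, and assuming $ax_r$ Drazin invertible, I would set $x:=x_r(ax_r)^{\mathrm{D}}$ and run the mirror-image computation, using $\prescript{\circ}{}{b}\subseteq\prescript{\circ}{}{x_r}$ in place of $c^\circ\subseteq x_l^\circ$ to carry the annihilator inclusion on the other side.
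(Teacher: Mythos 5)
Your proposal is correct and follows essentially the same route as the paper: both take the candidate $(x_la)^{\mathrm{D}}x_l$, verify $xab=b$ by iterating $x_lab=b$ against the Drazin relations for $x_la$, and pass the annihilator inclusion through $c^\circ\subseteq x_l^\circ$. The only cosmetic difference is that you phrase the key step via the idempotent $e=(x_la)(x_la)^{\mathrm{D}}$ fixing $b$, whereas the paper shows directly that $(x_la)^{\mathrm{D}}b=b$.
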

\begin{proof}
	Assume that $t\in R$ is the Drazin inverse of $x_la$. We now prove that $tx_l$ is the one required by this proposition. It is easy to see $(tx_l)a(tx_l)=(t(x_la)t)x_l=tx_l$. From
	\[
	b=x_lab=x_la(x_lab)=(x_la)^2b=\cdots=(x_la)^mb=t(x_la)^{m+1}b=tb,
	\]
	where $m$ is the Drazin index of $x_la$, we have $(tx_l)ab=tb=b$. Hence we complete the proof since $c^\circ\subseteq x_l^\circ\subseteq (tx_l)^\circ$. 
\end{proof}

\section{Intertwining properties}\label{sec 3}
Let $x_i \in S$ be the $(b_i,c_i)$-inverse of $a_i\in S$ for given $b_i,c_i\in S$, $i=1,2$. The equality
\begin{equation}\label{eq_intertwining}
yx_1=x_2y,\text{ where }y\in S,
\end{equation}
is known as an intertwining relation. Our main results in this section are Theorems \ref{thm_comm} and \ref{thm_intertwine}. We approach these results by first considering the case of annihilator $(b,c)$-inverses. Conditions $xab=b$ and $cax=c$ in \eqref{eq_def_ann} will play a key role in our proof. Some special cases of intertwining relations are also discussed.

First, we give the following facts which will be used often in the sequel. We omit its proof since the proof is straightforward.

\begin{fact}\label{fact}
The following statements hold.
\begin{enumerate}[i)]
	\item If $x\in S$ is the $(b,c)$-inverse of $a\in S$, then $bS=xS$ and $Sc=Sx$.
	\item If $x\in R$ is the ann-$(b,c)$-inverse of $a\in R$, then $\prescript{\circ}{}{x}=\prescript{\circ}{}{b}$ and $x^\circ=c^\circ$.
	\item Let $t\in S$, and let $a,x\in S$ such that $xax=x$. Then $t\in xS$ (resp. $t\in Sx$) if and only if $t=xat$ (resp. $t=tax$). In particular, $t\in axS$ (resp. $t\in Sxa$) if and only if $t=axt$ (resp. $t=txa$).
\end{enumerate}
\end{fact}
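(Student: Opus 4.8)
The plan is to verify the three parts separately, reading each claim off the relevant definition, and to be careful throughout that no identity element is covertly used, since $S$ need not be unital.

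For (i), I would work from the simplified characterization \eqref{eq_def_useful} of the $(b,c)$-inverse. The membership $x\in bS$ gives $xS\subseteq bS$ directly (if $x=bs_0$ then $xs=b(s_0s)$ for every $s$), while the equation $xab=b$ rewrites $b$ as $x(ab)$, so that $bs=x(abs)\in xS$ and hence $bS\subseteq xS$; combining the two inclusions yields $bS=xS$. The identity $Sc=Sx$ is then obtained by the mirror-image argument, using $x\in Sc$ for the inclusion $Sx\subseteq Sc$ and using $cax=c$ (which exhibits $c=(ca)x$) for $Sc\subseteq Sx$.

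For (ii), the two inclusions $\prescript{\circ}{}{b}\subseteq\prescript{\circ}{}{x}$ and $c^\circ\subseteq x^\circ$ are already part of Definition \ref{def_ann}, so only the reverse containments require proof, and these follow by pushing an annihilating element through the equations $xab=b$ and $cax=c$. Concretely, if $rx=0$ then $rb=r(xab)=(rx)ab=0$, which gives $\prescript{\circ}{}{x}\subseteq\prescript{\circ}{}{b}$; dually, if $xr=0$ then $cr=(cax)r=ca(xr)=0$, which gives $x^\circ\subseteq c^\circ$. Hence $\prescript{\circ}{}{x}=\prescript{\circ}{}{b}$ and $x^\circ=c^\circ$.

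For (iii), the equivalence $t\in xS\iff t=xat$ has one trivial direction, since $t=xat=x(at)\in xS$, and one direction that uses the standing hypothesis $xax=x$: writing $t=xs$ gives $xat=xa(xs)=(xax)s=xs=t$. The statement for $Sx$ is the left-right dual. For the ``in particular'' clause I would first observe that $xax=x$ forces both $ax$ and $xa$ to be idempotent, because $(ax)^2=a(xax)=ax$ and $(xa)^2=(xax)a=xa$; the equivalences $t\in axS\iff t=axt$ and $t\in Sxa\iff t=txa$ are then the standard characterizations of membership in the principal one-sided ideal generated by an idempotent (and can equally well be re-derived by the same one-line computation as above). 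There is no genuine obstacle in any of this — every step is a direct manipulation of the defining equations — and the only point demanding attention is to keep the argument phrased entirely in terms of the relations $xab=b$, $cax=c$, $xax=x$ together with the memberships $x\in bS$, $x\in Sc$, so that no cancellation by or multiplication with an identity is invoked.
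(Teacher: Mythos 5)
Your proof is correct: the paper deliberately omits the proof of this Fact as ``straightforward,'' and your argument is exactly the intended direct verification, reading each inclusion off the defining relations $xab=b$, $cax=c$, $x\in bS$, $x\in Sc$ (resp.\ the annihilator conditions) and using $xax=x$ for part iii), with appropriate care about the absence of an identity. Nothing further is needed.
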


\begin{thm}\label{thm_comm_ann}
Let $a_i, b_i, c_i\in R$, and suppose that $a_i$ is ann-$(b_i,c_i)$-invertible with the ann-$(b_i,c_i)$-inverse $x_i$, $i=1,2$. Then, for any given $y\in R^1$, $yx_1=x_2y$ if and only if $c_2ya_1b_1=c_2a_2yb_1,yb_1\in x_2R$, and $c_2y\in Rx_1$ (or equivalently, $c_2ya_1b_1=c_2a_2yb_1,yb_1=x_2a_2yb_1$, and $c_2y=c_2ya_1x_1$).
\end{thm}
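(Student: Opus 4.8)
The plan is to prove both implications directly from the defining identities $x_ia_ix_i=x_i$, $x_ia_ib_i=b_i$, $c_ia_ix_i=c_i$ together with the annihilator equalities $\prescript{\circ}{}{x_i}=\prescript{\circ}{}{b_i}$ and $x_i^\circ=c_i^\circ$ supplied by Fact~\ref{fact}. Before starting, I would record that the two stated forms of the condition are interchangeable: by Fact~\ref{fact}~iii) applied to $x_2a_2x_2=x_2$ and $x_1a_1x_1=x_1$, the membership $yb_1\in x_2R$ is the same as $yb_1=x_2a_2yb_1$, and $c_2y\in Rx_1$ is the same as $c_2y=c_2ya_1x_1$. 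Hence it suffices to work with whichever form is convenient at each point.

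For necessity, assume $yx_1=x_2y$. First I would multiply $x_1a_1b_1=b_1$ on the left by $y$ and substitute the hypothesis to get $yb_1=yx_1a_1b_1=x_2(ya_1b_1)$, so $yb_1\in x_2R$; left-multiplying by $c_2a_2$ and using $c_2a_2x_2=c_2$ then yields $c_2a_2yb_1=c_2ya_1b_1$, the first required equality. Dually, inserting $x_1=x_1a_1x_1$ gives $x_2y=yx_1=yx_1a_1x_1=x_2ya_1x_1$, and left-multiplying by $c_2a_2$ produces $c_2y=c_2ya_1x_1$, i.e.\ $c_2y\in Rx_1$. This settles the forward direction by routine substitution.

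The reverse direction is the crux, and the key device is a \emph{cancellation principle} coming from $x_2^\circ=c_2^\circ$: whenever $u=x_2a_2u$ and $v=x_2a_2v$, the single equality $c_2a_2u=c_2a_2v$ already forces $u=v$, because $c_2a_2(u-v)=0$ places $a_2(u-v)$ in $c_2^\circ=x_2^\circ$, whence $x_2a_2u=x_2a_2v$. I would apply this with $u=yx_1$ and $v=x_2y$. Here $v=x_2a_2v$ is immediate from $x_2a_2x_2=x_2$. To obtain $u=x_2a_2u$, I rewrite the hypothesis $yb_1=x_2a_2yb_1$ as $(y-x_2a_2y)b_1=0$, so that $y-x_2a_2y\in\prescript{\circ}{}{b_1}=\prescript{\circ}{}{x_1}$ gives $yx_1=x_2a_2yx_1$. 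Finally I verify $c_2a_2u=c_2a_2v$: the right-hand side is $c_2a_2x_2y=c_2y$, while the hypothesis $c_2a_2yb_1=c_2ya_1b_1$ reads $(c_2a_2y-c_2ya_1)b_1=0$, so $c_2a_2y-c_2ya_1\in\prescript{\circ}{}{b_1}=\prescript{\circ}{}{x_1}$ and therefore $c_2a_2yx_1=c_2ya_1x_1=c_2y$ by the third hypothesis $c_2y=c_2ya_1x_1$. The cancellation principle then delivers $yx_1=x_2y$.

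The main obstacle is recognizing the cancellation principle above, together with its companion move of passing from an equality of the shape $(\cdot)b_1=0$ to $(\cdot)x_1=0$ through $\prescript{\circ}{}{b_1}=\prescript{\circ}{}{x_1}$ (and, dually, reading equalities on the right through $x_2^\circ=c_2^\circ$). Once these are isolated, the remainder is careful bookkeeping of the three defining identities; the only point demanding attention is keeping straight which annihilator equality---the left one $\prescript{\circ}{}{x_i}=\prescript{\circ}{}{b_i}$ or the right one $x_i^\circ=c_i^\circ$---is in force at each substitution.
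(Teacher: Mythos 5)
Your argument is correct in substance but follows a genuinely different route from the paper. The paper's proof is a perturbation computation: it sets $\varepsilon_1=ya_1-a_2y$, $\varepsilon_2=yb_1-b_2y$, $\varepsilon_3=yc_1-c_2y$, $\tau=yx_1-x_2y$, derives the three identities $\tau=\tau a_1x_1+x_2a_2\tau+x_2\varepsilon_1x_1$, $\tau a_1b_1=\varepsilon_2-x_2a_2\varepsilon_2-x_2\varepsilon_1b_1$, $c_2a_2\tau=\varepsilon_3-\varepsilon_3a_1x_1-c_2\varepsilon_1x_1$, and then shows three systems of conditions are equivalent before translating them into the stated form; a by-product is the symmetric system (\ref{eq_thm_comm_ann_4}.3), which the paper reuses elsewhere. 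You instead verify the two implications directly, and your ``cancellation principle'' ($c_2a_2u=c_2a_2v$ with $u=x_2a_2u$, $v=x_2a_2v$ forces $u=v$, via $x_2^\circ=c_2^\circ$) is exactly the content of the first identity above specialized to $\varepsilon_1$-free form; your version is shorter and arguably more transparent, at the cost of not producing the auxiliary equivalent systems.

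One technical point needs repair because the theorem allows $y\in R^1$ and $R$ need not be unital. In the step where you pass from $(y-x_2a_2y)b_1=0$ to $(y-x_2a_2y)x_1=0$, the element $y-x_2a_2y$ lies in $R$ only when $y\in R$; for $y=1\in R^1\setminus R$ it does not, and the inclusion $\prescript{\circ}{}{b_1}\subseteq\prescript{\circ}{}{x_1}$ is a statement about elements of $R$ only (indeed, for $r\in R$ with $rb_1=b_1$ one can in general only conclude $x_1-rx_1\in R^\circ$, which need not vanish in the non-faithful rings this paper is concerned with). The fix is immediate: write $yx_1-x_2a_2yx_1=\bigl(yx_1a_1-x_2a_2yx_1a_1\bigr)x_1$, note that $yx_1a_1-x_2a_2yx_1a_1\in R$ always, and that it kills $b_1$ because $yx_1a_1b_1=yb_1=x_2a_2yb_1=x_2a_2yx_1a_1b_1$; then $\prescript{\circ}{}{b_1}=\prescript{\circ}{}{x_1}$ applies. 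Your other annihilator steps ($c_2a_2y-c_2ya_1\in R$, and $a_2(u-v)\in R$) already stay inside $R$ and need no change. With this adjustment the proof is complete.
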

\begin{proof}
	Set $ya_1-a_2y=\varepsilon_1,yb_1-b_2y=\varepsilon_2,yc_1-c_2y=\varepsilon_3$, and $yx_1-x_2y=\tau$. We have
	\begin{align*}
	\left(yx_1-x_2y\right)a_1x_1&=yx_1a_1x_1-x_2\left(a_2y+\varepsilon_1\right)x_1\\
	&=yx_1-x_2a_2\left(x_2y+\tau\right)-x_2\varepsilon_1x_1\\
	&=\left(yx_1-x_2y\right)-x_2a_2\tau-x_2\varepsilon_1x_1,
	\end{align*}
	that is,
	\begin{equation}\label{eq_thm_comm_ann_1}
	\tau=\tau a_1x_1+x_2a_2\tau+x_2\varepsilon_1x_1.
	\end{equation}
	Also, we have
	\begin{align*}
	\left(yx_1-x_2y\right)a_1b_1&=yx_1a_1b_1-x_2\left(a_2y+\varepsilon_1\right)b_1\\
	&=yb_1-x_2a_2\left(b_2y+\varepsilon_2\right)-x_2\varepsilon_1b_1\\
	&=\left(yb_1-b_2y\right)-x_2a_2\varepsilon_2-x_2\varepsilon_1b_1,
	\end{align*}
	that is,
	\begin{equation}\label{eq_thm_comm_ann_2}
	\tau a_1b_1=\varepsilon_2-x_2a_2\varepsilon_2-x_2\varepsilon_1b_1.
	\end{equation}
	Similarly, we get
	\begin{equation}\label{eq_thm_comm_ann_3}
	c_2a_2\tau=\varepsilon_3-\varepsilon_3a_1x_1-c_2\varepsilon_1x_1.
	\end{equation}
	Consider the following statements.
	\begin{equation}\label{eq_thm_comm_ann_4}
	\mathrm{1)}~\begin{cases}
	x_2\varepsilon_1x_1=0,\\
	\tau a_1x_1=0,\\
	x_2a_2\tau=0.
	\end{cases}\qquad
	\mathrm{2)}~\begin{cases}
	x_2\varepsilon_1x_1=0,\\
	\tau a_1b_1=0,\\
	c_2a_2\tau=0.
	\end{cases}\qquad
	\mathrm{3)}~\begin{cases}
	c_2\varepsilon_1b_1=0,\\
	\varepsilon_2=x_2a_2\varepsilon_2,\\
	\varepsilon_3=\varepsilon_3a_1x_1.
	\end{cases}
	\end{equation}
	By \eqref{eq_thm_comm_ann_1}, it is easy to see that $\tau=0$ is equivalent to (\ref{eq_thm_comm_ann_4}.1). Next, $\tau a_1x_1=0$ and $x_2a_2\tau=0$ are equivalent, respectively, to $\tau a_1b_1=0$ and $c_2a_2\tau=0$ since $\prescript{\circ}{}{x_1}=\prescript{\circ}{}{b_1}$ and $x_2^\circ=c_2^\circ$. Thus (\ref{eq_thm_comm_ann_4}.1)$\iff$(\ref{eq_thm_comm_ann_4}.2). Also, the following equalities are equivalent.
	\[
	x_2\varepsilon_1x_1=0,\quad
	x_2\varepsilon_1b_1=0,\quad
	c_2\varepsilon_1x_1=0,\quad
	c_2\varepsilon_1b_1=0.
	\]
	Suppose that (\ref{eq_thm_comm_ann_4}.2) hold. Note that $x_2\varepsilon_1x_1=0$ implies $x_2\varepsilon_1b_1=0$. From \eqref{eq_thm_comm_ann_2}, equalities $x_2\varepsilon_1b_1=0$ and $\tau a_1b_1=0$ give $\varepsilon_2=x_2a_2\varepsilon_2$. Similarly, we have $\varepsilon_3=\varepsilon_3a_1x_1$. The proof of (\ref{eq_thm_comm_ann_4}.3)$\implies$(\ref{eq_thm_comm_ann_4}.2) is almost the same as (\ref{eq_thm_comm_ann_4}.2)$\implies$(\ref{eq_thm_comm_ann_4}.3). Therefore, (\ref{eq_thm_comm_ann_4}.2)$\iff$(\ref{eq_thm_comm_ann_4}.3).
	
	Combining with $ya_1-a_2y=\varepsilon_1$, equality $c_2\varepsilon_1b_1=0$ can be equivalently written as $c_2(ya_1-a_2y)b_1=0$, that is, $c_2ya_1b_1=c_2a_2yb_1$. In this case, $\varepsilon_2=x_2a_2\varepsilon_2$ is equivalent to $(yb_1-b_2y)=x_2a_2(yb_1-b_2y)$, that is, $yb_1=x_2a_2yb_1$. By Fact \ref{fact}.iii, $\varepsilon_2=x_2a_2\varepsilon_2$ if and only if $yb_1\in x_2R$. Similarly, $\varepsilon_3=\varepsilon_3a_1x_1$ is equivalent to $c_2y\in Rx_1$.
\end{proof}

From $xab=b$ (resp. $cax=c$), we see $bR\subseteq xR$ (resp. $Rc\subseteq Rx$). We then have the following corollary.

\begin{cor}\label{cor_comm_ann}
Let $a_i, b_i, c_i\in R$, and suppose that $a_i$ is ann-$(b_i,c_i)$-invertible with the ann-$(b_i,c_i)$-inverse $x_i$, $i=1,2$. For any given $y\in R^1$, if $c_2ya_1b_1=c_2a_2yb_1,yb_1\in b_2R$, and $c_2y\in Rc_1$, then $yx_1=x_2y$.
\end{cor}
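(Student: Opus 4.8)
The plan is to deduce this corollary directly from Theorem \ref{thm_comm_ann}, whose equivalent hypotheses for $yx_1=x_2y$ are $c_2ya_1b_1=c_2a_2yb_1$, $yb_1\in x_2R$, and $c_2y\in Rx_1$. The first of these is literally the first hypothesis of the corollary, so nothing needs to be done there. It therefore remains only to check that the two membership hypotheses $yb_1\in b_2R$ and $c_2y\in Rc_1$ supplied here are enough to guarantee the conditions $yb_1\in x_2R$ and $c_2y\in Rx_1$ demanded by the theorem.

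First I would record the two inclusions $b_2R\subseteq x_2R$ and $Rc_1\subseteq Rx_1$. For the former, condition (\ref{eq_def_ann}.2) applied to $x_2$ gives $x_2a_2b_2=b_2$, so that $b_2=x_2(a_2b_2)\in x_2R$, and more generally $b_2r=x_2(a_2b_2r)\in x_2R$ for every $r\in R$; this is valid without assuming a unit, which is the only point that requires a little care in the non-unital setting. Dually, condition (\ref{eq_def_ann}.3) applied to $x_1$ gives $c_1a_1x_1=c_1$, whence $c_1=(c_1a_1)x_1\in Rx_1$ and $rc_1=(rc_1a_1)x_1\in Rx_1$ for every $r\in R$, so that $Rc_1\subseteq Rx_1$. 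These are precisely the inclusions $bR\subseteq xR$ and $Rc\subseteq Rx$ noted immediately before the corollary.

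Combining these with the hypotheses, $yb_1\in b_2R\subseteq x_2R$ yields $yb_1\in x_2R$, and $c_2y\in Rc_1\subseteq Rx_1$ yields $c_2y\in Rx_1$. All three conditions in Theorem \ref{thm_comm_ann} are thereby met, and the conclusion $yx_1=x_2y$ follows at once. I do not expect any genuine obstacle here: the entire content is the pair of one-line inclusions coming from the defining equations $x_2a_2b_2=b_2$ and $c_1a_1x_1=c_1$, and the only subtlety is to phrase them so that they remain correct when $R$ has no identity.
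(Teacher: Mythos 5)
Your proof is correct and matches the paper's own argument: the paper likewise deduces the corollary from Theorem \ref{thm_comm_ann} via the inclusions $b_2R\subseteq x_2R$ and $Rc_1\subseteq Rx_1$ obtained from $x_2a_2b_2=b_2$ and $c_1a_1x_1=c_1$ (stated in the sentence immediately preceding the corollary). Your extra care about the non-unital setting is sound but adds nothing beyond what the paper already asserts.
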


We now reveal the relation between Drazin inverses and annihilator $(b,c)$-inverses.

\begin{cor}
Let $a\in R$. Then $a$ is Drazin invertible if and only if $a$ is ann-$(a^m,a^m)$-invertible for some positive integer $m$. In this case, $a^\mathrm{D}$ coincides with $a^{\circ(a^m,a^m)}$, and $\operatorname{ind}(a)$ coincides with the least $m$ for which $a$ is ann-$(a^m,a^m)$-invertible.
\end{cor}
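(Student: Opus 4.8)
The plan is to mirror the proof of Corollary~\ref{cor_relation}, leaning on Lemma~\ref{lem_relation}.ii and the uniqueness result Theorem~\ref{thm_unique_ann}. The forward implication is the easy one. If $a$ is Drazin invertible, then by Lemma~\ref{lem_relation}.ii it is $(a^m,a^m)$-invertible for some positive integer $m$, with $a^{(a^m,a^m)}=a^\mathrm{D}$ and with $\operatorname{ind}(a)$ equal to the least such $m$. I would check that any $(a^m,a^m)$-inverse $x$ is automatically an ann-$(a^m,a^m)$-inverse: by \eqref{eq_def_useful} we have $xax=x$, $xa\cdot a^m=a^m$, $a^m\cdot ax=a^m$, together with $x\in a^mR^1$ and $x\in R^1a^m$; the last two memberships give $\prescript{\circ}{}{(a^m)}\subseteq\prescript{\circ}{}{x}$ and $(a^m)^\circ\subseteq x^\circ$, so all five conditions of \eqref{eq_def_ann} hold. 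Theorem~\ref{thm_unique_ann} then yields $a^{\circ(a^m,a^m)}=a^{(a^m,a^m)}=a^\mathrm{D}$.

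For the converse, suppose $a$ is ann-$(a^m,a^m)$-invertible with ann-inverse $x$, so that $xax=x$, $xa^{m+1}=a^m$, $a^{m+1}x=a^m$, $\prescript{\circ}{}{(a^m)}\subseteq\prescript{\circ}{}{x}$, and $(a^m)^\circ\subseteq x^\circ$. The strategy is to show that $x$ already satisfies the Drazin equations \eqref{eq_def_drazin}. First, left-multiplying $a^{m+1}x=a^m$ by $x$ and simplifying with $xa^{m+1}=a^m$ gives the partial commutation $a^mx=xa^m$. Using this I would compute $a^m(ax-xa)=0$ and $(ax-xa)a^m=0$; that is, $ax-xa$ lies in both $(a^m)^\circ$ and $\prescript{\circ}{}{(a^m)}$. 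Feeding these into the two annihilator inclusions and using $xax=x$ yields $x^2a=x$ and $ax^2=x$, and then $ax=a(x^2a)=(ax^2)a=xa$. Thus $ax=xa$, $x^2a=x$, and $a^{m+1}x=a^m$, so $x$ is the Drazin inverse and $a$ is Drazin invertible with $\operatorname{ind}(a)\le m$.

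Finally, to pin down the index, I would combine the two directions. Lemma~\ref{lem_relation}.ii says $\operatorname{ind}(a)$ is the least $m$ for which $a$ is $(a^m,a^m)$-invertible; since $(a^m,a^m)$-invertibility forces ann-$(a^m,a^m)$-invertibility (the forward step), the least $m$ for which $a$ is ann-$(a^m,a^m)$-invertible is at most $\operatorname{ind}(a)$. Conversely the previous paragraph shows ann-$(a^m,a^m)$-invertibility forces $\operatorname{ind}(a)\le m$, so the two least values coincide.

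I expect the converse to be the main obstacle: the annihilator inclusions $\prescript{\circ}{}{(a^m)}\subseteq\prescript{\circ}{}{x}$ and $(a^m)^\circ\subseteq x^\circ$ are genuinely weaker than the one-sided ideal memberships $x\in R^1a^m$, $x\in a^mR^1$ appearing in \eqref{eq_def_useful}, so over an arbitrary ring one cannot simply quote Lemma~\ref{lem_relation}.ii to recover $(a^m,a^m)$-invertibility. The point is that the self-conjugate choice $b=c=a^m$, through the relations $xa^{m+1}=a^m=a^{m+1}x$, produces the element $ax-xa$ annihilated by $a^m$ on both sides, which is exactly what the two inclusions can act on; the delicate part is the bookkeeping that converts this into the commutativity $ax=xa$ and the Drazin equation $x^2a=x$.
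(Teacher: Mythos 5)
Your proof is correct, and in the converse direction it takes a genuinely different route from the paper. The forward direction and the index bookkeeping are essentially the paper's argument (the paper reads the annihilator inclusions off $a^n(a^{\mathrm{D}})^{n+1}=a^{\mathrm{D}}=(a^{\mathrm{D}})^{n+1}a^n$ rather than off $x\in a^mR\cap Ra^m$, but that is cosmetic). For the converse, the paper notes that $aa^m=a^ma=a^{m+1}\in a^mR\cap Ra^m$ and invokes Corollary \ref{cor_comm_ann} (the intertwining machinery of Theorem \ref{thm_comm_ann} with $y=a$, $a_1=a_2=a$, $b_i=c_i=a^m$) to obtain $ax=xa$ in one stroke; then $x^2a=xax=x$ together with $a^{m+1}x=a^m$ gives the Drazin equations. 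You instead extract commutativity by hand: from $xa^{m+1}=a^m=a^{m+1}x$ you get $a^mx=xa^{m+1}x=xa^m$, hence $a^m(ax-xa)=a^m-xa^{m+1}=0$ and $(ax-xa)a^m=axa^m-a^m=0$, and the inclusions $(a^m)^\circ\subseteq x^\circ$ and $\prescript{\circ}{}{(a^m)}\subseteq\prescript{\circ}{}{x}$ then give $x(ax-xa)=0$ and $(ax-xa)x=0$, i.e. $x^2a=xax=x=ax^2$, whence $ax=a(x^2a)=(ax^2)a=xa$. I checked these computations and they are sound. Your version is more elementary and self-contained, needing only Definition \ref{def_ann} and Theorem \ref{thm_unique_ann}; the paper's version is shorter at this point but deliberately routes through the Section \ref{sec 3} intertwining results, which is presumably why the corollary is placed there. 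Your closing observation — that the annihilator inclusions are weaker than the ideal memberships in \eqref{eq_def_useful}, so one cannot simply quote Lemma \ref{lem_relation}.ii for the converse — correctly identifies the real content of the statement.
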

\begin{proof}
Suppose that $a$ is Drazin invertible. Set $n=\operatorname{ind}(a)$. Note that $a^n(a^\mathrm{D})^{n+1}=a^\mathrm{D}=(a^\mathrm{D})^{n+1}a^n$ implies $\prescript{\circ}{}{(a^n)}\subseteq\prescript{\circ}{}{(a^\mathrm{D})}$ and $(a^n)^\circ\subseteq(a^\mathrm{D})^\circ$. Thus, it is easy to verify that $a$ is ann-$(a^n,a^n)$-invertible with $a^{\circ(a^n,a^n)}=a^\mathrm{D}$.

Conversely, suppose that $a$ is ann-$(a^m,a^m)$-invertible. Since $aa^m=a^ma=a^{m+1}\in a^mR\cap Ra^m$, Corollary \ref{cor_comm_ann} shows that $a^{\circ(a^m,a^m)}$ commutes with $a$. Hence, $a$ is Drazin invertible with $a^\mathrm{D}=a^{\circ(a^m,a^m)}$.

Let $m$ be the least positive integer for which $a$ is ann-$(a^m,a^m)$-invertible. The ``only if'' part gives $m\leqslant\operatorname{ind}(a)$. On the other hand, the ``if'' part gives $\operatorname{ind}(a)\leqslant m$. Therefore, such $m$ coincides with $\operatorname{ind}(a)$.
\end{proof}

Theorem \ref{thm_comm_ann} also brings us to an important theorem as follows.

\begin{thm}\label{thm_comm}
Let $a_i, b_i, c_i\in S$, and suppose that $a_i$ is $(b_i,c_i)$-invertible with the $(b_i,c_i)$-inverse $x_i$, $i=1,2$. Then, for any given $y\in S^1$, $yx_1=x_2y$ if and only if $c_2ya_1b_1=c_2a_2yb_1,yb_1\in b_2S$, and $c_2y\in Sc_1$ (or equivalently, $c_2ya_1b_1=c_2a_2yb_1,yb_1=x_2a_2yb_1$, and $c_2y=c_2ya_1x_1$).
\end{thm}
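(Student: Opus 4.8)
The plan is to give a self-contained semigroup argument rather than to imitate the proof of Theorem \ref{thm_comm_ann}, since that proof worked with the differences $\varepsilon_i=ya_i-a_iy$ and $\tau=yx_1-x_2y$, and such cancellation is unavailable in a general semigroup. Instead I would replace every annihilator manipulation by its multiplicative counterpart from Fact \ref{fact}: the equalities $b_iS=x_iS$ and $Sc_i=Sx_i$ (Fact \ref{fact}.i) let me slide the outer inverses $x_i$ into and out of products, while Fact \ref{fact}.iii converts the two membership conditions into equations. Concretely, since $yb_1\in b_2S=x_2S$ is, by Fact \ref{fact}.iii applied to $x_2a_2x_2=x_2$, equivalent to $yb_1=x_2a_2yb_1$, and $c_2y\in Sc_1=Sx_1$ is equivalent to $c_2y=c_2ya_1x_1$, the two displayed forms of the hypotheses coincide. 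I would establish this equivalence first, which disposes of the parenthetical ``or equivalently'' claim and lets me work with the equational form throughout.

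For the forward implication I would assume $yx_1=x_2y$ and read off the three equalities directly from the defining identities $x_ia_ib_i=b_i$, $c_ia_ix_i=c_i$, and $x_ia_ix_i=x_i$. From $c_2=c_2a_2x_2$ and $x_2y=yx_1$ I obtain $c_2y=c_2a_2yx_1$; multiplying on the right first by $a_1x_1$ and then by $a_1b_1$, and collapsing with $x_1a_1x_1=x_1$ and $x_1a_1b_1=b_1$, yields $c_2y=c_2ya_1x_1$ and $c_2ya_1b_1=c_2a_2yb_1$. Dually, $yb_1=yx_1a_1b_1=x_2ya_1b_1$, so $x_2a_2yb_1=x_2a_2x_2ya_1b_1=x_2ya_1b_1=yb_1$. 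Hence all three conditions hold, and no subtraction is needed.

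The converse is where the genuine work lies, and I expect it to be the main obstacle, because I must reconstruct $yx_1=x_2y$ from three ostensibly weaker relations with no cancellation at my disposal. The key device is to promote the factor $c_2$ appearing in $c_2ya_1b_1=c_2a_2yb_1$ up to $x_2$: writing $x_2=uc_2$ (from $x_2\in Sc_2$) and applying that hypothesis gives the upgraded identity $x_2ya_1b_1=x_2a_2yb_1$. I would then use $x_1\in b_1S$, say $x_1=b_1t_1$, to transport this identity from $b_1$ to $x_1$, obtaining $x_2a_2yx_1=x_2ya_1x_1$. Finally I would sandwich both sides of the target equation against the common term $x_2ya_1x_1$: the hypothesis $yb_1=x_2a_2yb_1$ together with $x_1=b_1t_1$ gives $yx_1=x_2a_2yx_1=x_2ya_1x_1$, while the hypothesis $c_2y=c_2ya_1x_1$ together with $x_2=uc_2$ gives $x_2y=x_2ya_1x_1$, so the two left-hand sides agree and $yx_1=x_2y$ follows.
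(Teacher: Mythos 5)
Your proposal is correct and follows essentially the same route as the paper: the converse is exactly the paper's chain $yx_1=x_2a_2yx_1=x_2ya_1x_1=x_2y$, obtained by writing $x_1=b_1t_1$ and $x_2=uc_2$ to upgrade the three hypotheses, and the forward direction and the Fact~\ref{fact}.iii reformulation of the membership conditions likewise match the paper's argument, merely with more intermediate steps displayed.
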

\begin{proof}
	Fact \ref{fact} shows $yb_1=x_2a_2yb_1$ in this case can be equivalently written as $yb_1\in x_2S=b_2S$. Similarly, $c_2y=c_2ya_1x_1$ is equivalent to $c_2y\in Sc_1$.
	
	The ``if'' part follows from $yx_1=x_2a_2yx_1=x_2ya_1x_1=x_2y$ by using $x_1\in b_1S$ and $x_2\in Sc_2$. For the ``only if'' part, we see that $c_2a_2(yx_1)a_1b_1=c_2a_2(x_2y)a_1b_1$. Thus $c_2a_2yb_1=c_2ya_1b_1$. On the other hand,
	\[
	yb_1=yx_1a_1b_1=x_2ya_1b_1=x_2a_2x_2ya_1b_1=x_2a_2yx_1a_1b_1=x_2a_2yb_1.
	\]
	Similarly, we get $c_2y=c_2ya_1x_1$.
\end{proof}

\begin{rmk}\label{rmk_intertwining_sided}
According to the proof of Theorem \ref{thm_comm}, if $x_1$ is a right $(b_1,c_1)$-inverse of $a_1$ and $x_2$ is a left $(b_2,c_2)$-inverse of $a_2$ then $c_2ya_1b_1=c_2a_2yb_1,yb_1=x_2a_2yb_1, c_2y=c_2ya_1x_1$ is sufficient for $yx_1=x_2y$ (see e.g. \cite[Theorem 5.1]{drazin_left_2016}, or \cite{drazin_commuting_2013}). Conversely, if $x_1$ is a regular, i.e., $x_1a_1x_1=x_1$, left $(b_1,c_1)$-inverse of $a_1$ and $x_2$ is a regular right $(b_2,c_2)$-inverse of $a_2$ then $c_2ya_1b_1=c_2a_2yb_1,yb_1=x_2a_2yb_1, c_2y=c_2ya_1x_1$ is necessary for $yx_1=x_2y$.

In particular, take $a_1=a_2=a\in S$ and $y=1$. Then $x_i$, the $(b_i,c_i)$-inverse of $a$, $i=1,2$, coincide with each other if and only if $b_1S=b_2S$ and $Sc_1=Sc_2$ (cf. \cite[Remark 2.2.i]{boasso_b_2017}). The proof is close in spirit to that of Corollary \ref{cor_intertwine}.
\end{rmk}

Recall that $\mathcal{F}$ is a \textit{left} (resp. \textit{right}) \textit{centralizer} on $S$ if $\mathcal{F}$ is a map of $S$ to $S$ such that $\mathcal{F}(x)y=\mathcal{F}(xy)$ (resp. $x\mathcal{F}(y)=\mathcal{F}(xy)$) for all $x,y\in S$.

\begin{cor}[cf. {\cite[Lemma 3.6]{xu_centralizers_2019}}]
Let $a_i, b_i, c_i\in S$, and suppose that $a_i$ is $(b_i,c_i)$-invertible with the $(b_i,c_i)$-inverse $x_i$, $i=1,2$. For any $y\in S$ that satisfies $ya_1=a_2y$, if there exist a right centralizer $\mathcal{F}$ and a left centralizer $\mathcal{G}$ on $S$ such that $yb_1=\mathcal{F}(b_2y)$ and $c_2y=\mathcal{G}(yc_1)$, then $yx_1=x_2y$.
\end{cor}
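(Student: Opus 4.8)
The plan is to invoke Theorem \ref{thm_comm} essentially as a black box: it already reduces the intertwining identity $yx_1=x_2y$ to the three checkable conditions $c_2ya_1b_1=c_2a_2yb_1$, $yb_1\in b_2S$, and $c_2y\in Sc_1$ on the data $a_i,b_i,c_i$ and $y$. Since $y\in S\subseteq S^1$, the hypothesis of Theorem \ref{thm_comm} on the ambient monoid is met, so it suffices to verify these three conditions; the conclusion is then immediate. I expect no genuine obstacle here, only careful bookkeeping of the one-sided centralizer laws.

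First, the identity $c_2ya_1b_1=c_2a_2yb_1$ is simply the commuting relation $ya_1=a_2y$ multiplied on the left by $c_2$ and on the right by $b_1$; no centralizer structure is needed, since $c_2(ya_1)b_1=c_2(a_2y)b_1$. Next, to establish $yb_1\in b_2S$, I would feed the hypothesis $yb_1=\mathcal{F}(b_2y)$ into the defining property of the right centralizer $\mathcal{F}$, namely $s\mathcal{F}(t)=\mathcal{F}(st)$. Taking $s=b_2$ and $t=y$ gives $\mathcal{F}(b_2y)=b_2\mathcal{F}(y)$, and because $\mathcal{F}(y)\in S$ this yields $yb_1=b_2\mathcal{F}(y)\in b_2S$. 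Dually, for $c_2y\in Sc_1$ I would use the left centralizer identity $\mathcal{G}(s)t=\mathcal{G}(st)$ with $s=y$ and $t=c_1$ to rewrite $c_2y=\mathcal{G}(yc_1)=\mathcal{G}(y)c_1\in Sc_1$.

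With the three conditions in hand, Theorem \ref{thm_comm} delivers $yx_1=x_2y$, completing the argument. The only point that demands attention is matching the correct one-sided centralizer property to each membership statement: the right centralizer $\mathcal{F}$ extracts the left factor $b_2$ from $\mathcal{F}(b_2y)$ so that $yb_1$ lands in $b_2S$, whereas the left centralizer $\mathcal{G}$ extracts the right factor $c_1$ from $\mathcal{G}(yc_1)$ so that $c_2y$ lands in $Sc_1$. Getting these two directions crossed would produce the wrong membership; apart from that, the proof is a routine substitution into the hypotheses of Theorem \ref{thm_comm}.
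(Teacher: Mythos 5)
Your proof is correct and is exactly the intended derivation: the paper states this corollary without proof immediately after Theorem \ref{thm_comm}, and the expected argument is precisely your reduction, using $ya_1=a_2y$ to get $c_2ya_1b_1=c_2a_2yb_1$ and the right/left centralizer identities to place $yb_1=b_2\mathcal{F}(y)\in b_2S$ and $c_2y=\mathcal{G}(y)c_1\in Sc_1$. You also correctly matched each one-sided centralizer to the corresponding membership condition, which is the only place one could slip.
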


For more applications of centralizers to generalized inverses, we refer readers to \cite{zhu_centralizers_2014,zhu_centralizers_2017,xu_centralizers_2019}. Likewise, our results in this section seem to propose a new approach to study $m$-EP elements in $^*$-rings. (We say $a$ is $m$-EP if $a$ is both Moore--Penrose and Drazin invertible, and its Moore--Penrose inverse commutes with $a^m$, where $m=\operatorname{ind}(a)$ \cite[Lemma 3.6]{zou_characterizations_2018}.) However, we will not discuss it here, since it may stray from the main point of this paper.

Motivated by \cite[Theorem 4.1]{drazin_bicommuting_2017}, we consider some special cases of intertwining relations.

\begin{thm}\label{thm_intertwine_ann}
Let $a_i, b_i, c_i\in R$, and suppose that $a_i$ is ann-$(b_i,c_i)$-invertible with the ann-$(b_i,c_i)$-inverse $x_i$, $i=1,2$. Then, for any given $y\in R^1$,
\begin{enumerate}[i)]
	\item $ya_1x_1=a_2x_2y\iff ya_1b_1\in a_2x_2R\text{ and }c_2y\in Rx_1$.
	\item $yx_1a_1=x_2a_2y\iff yb_1\in x_2R\text{ and }c_2a_2y\in Rx_1a_1$.
	\item $ya_1x_1=x_2a_2y\iff ya_1b_1\in x_2R\text{ and }c_2a_2y\in Rx_1$.
	\item $yx_1a_1=a_2x_2y\iff yb_1\in a_2x_2R\text{ and }c_2y\in Rx_1a_1$.
\end{enumerate}
\end{thm}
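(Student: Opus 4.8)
The plan is to prove all four equivalences by the same two-step mechanism that drives Theorem \ref{thm_comm_ann}. First I would rewrite each membership condition on the right-hand side as an explicit equality using Fact \ref{fact}.iii: every $x_i$ is regular (indeed $x_ia_ix_i = x_i$ by (\ref{eq_def_ann}.1)), so the Fact applies and, for example in part i), $ya_1b_1\in a_2x_2R$ becomes $ya_1b_1 = a_2x_2ya_1b_1$ while $c_2y\in Rx_1$ becomes $c_2y = c_2ya_1x_1$. The engine for passing between the generators $b_i,c_i$ and the inverses $x_i$ will be the annihilator equalities $\prescript{\circ}{}{x_i} = \prescript{\circ}{}{b_i}$ and $x_i^\circ = c_i^\circ$ supplied by Fact \ref{fact}.ii.

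For the forward implication of each equality I would multiply the given identity by a suitable factor on one side and collapse using the defining relations $x_ia_ib_i = b_i$, $c_ia_ix_i = c_i$, $x_ia_ix_i = x_i$. For instance, to prove the forward direction of i), right-multiplying $ya_1x_1 = a_2x_2y$ by $a_1b_1$ and applying $x_1a_1b_1 = b_1$ yields $ya_1b_1 = a_2x_2ya_1b_1$, whereas left-multiplying by $c_2a_2$ and applying $c_2a_2x_2 = c_2$ together with $x_2a_2x_2 = x_2$ yields $c_2y = c_2ya_1x_1$. Parts ii)--iv) follow by the analogous choice of multipliers (by $b_1$ or $a_1b_1$ on the right, by $c_2a_2$ or $c_2a_2x_2$ on the left), the only difference being whether $a_1$ or $x_1$ sits next to the multiplier.

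For the converse I would run the annihilator equalities in the opposite direction. Continuing with i): from $ya_1b_1 = a_2x_2ya_1b_1$ I get $(y - a_2x_2y)a_1\in\prescript{\circ}{}{b_1} = \prescript{\circ}{}{x_1}$, hence $(y - a_2x_2y)a_1x_1 = 0$, i.e. $ya_1x_1 = a_2x_2ya_1x_1$; from $c_2y = c_2ya_1x_1$ I get $y - ya_1x_1\in c_2^\circ = x_2^\circ$, hence $x_2y = x_2ya_1x_1$ and so $a_2x_2y = a_2x_2ya_1x_1$. Chaining the two gives $ya_1x_1 = a_2x_2ya_1x_1 = a_2x_2y$. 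Parts ii)--iv) close identically after relabeling, with the left-annihilator equality used to replace $b_i$ by $x_i$ on the right and the right-annihilator equality used to replace $c_i$ by $x_i$ on the left.

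I do not anticipate a genuine obstacle: the content is the same \emph{strip-and-replace} argument carried out four times. The only point requiring care is the bookkeeping, namely consistently invoking $\prescript{\circ}{}{x_i} = \prescript{\circ}{}{b_i}$ for replacements on the right and $x_i^\circ = c_i^\circ$ for replacements on the left, and tracking whether the factor adjacent to $x_1$ is $a_1x_1$, $x_1a_1$, or $x_1$ in each case. Since the four parts are genuinely parallel, I would write part i) in full and remark that ii)--iv) are obtained by the symmetric choice of side on which one multiplies and strips.
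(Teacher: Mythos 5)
Your proof is correct and essentially the same as the paper's: both translate the membership conditions into equalities via Fact \ref{fact}.iii and pass between $b_1,c_2$ and $x_1,x_2$ using the annihilator identities of Fact \ref{fact}.ii, the only difference being that the paper packages both implications into the single identity $\tau_1=\tau_1a_1x_1+a_2x_2\tau_1$ (with $\tau_1=ya_1x_1-a_2x_2y$) while you argue each direction separately. One trivial bookkeeping slip: in part i) the left multiplier that yields $c_2y=c_2ya_1x_1$ is $c_2$ itself (via $c_2a_2x_2=c_2$), not $c_2a_2$, which is instead what parts ii) and iii) require.
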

\begin{proof}
	Part i): Set $ya_1x_1-a_2x_2y=\tau_1$. From
	\[
	(ya_1x_1-a_2x_2y)a_1x_1=ya_1x_1a_1x_1-a_2x_2(a_2x_2y+\tau_1)=(ya_1x_1-a_2x_2y)-a_2x_2\tau_1,
	\]
	we have
	\begin{equation}\label{eq_thm_intertwine_ann}
	\tau_1=\tau_1a_1x_1+a_2x_2\tau_1.
	\end{equation}
	Hence, we obtain that $\tau_1=0$ is equivalent to $\tau_1a_1x_1=0$ and $a_2x_2\tau_1=0$. On the other hand, $(a_2x_2)^\circ\subseteq c_2^\circ$ since $c_2a_2x_2=c_2$. This yields $x_2^\circ\subseteq (a_2x_2)^\circ\subseteq c_2^\circ\subseteq x_2^\circ$, that is, $(a_2x_2)^\circ=c_2^\circ$. Combining with $\prescript{\circ}{}{x_1}=\prescript{\circ}{}{b_1}$, we see that $\tau_1a_1x_1=0$ and $a_2x_2\tau_1=0$ are equivalent, respectively, to $\tau_1a_1b_1=0$ and $c_2\tau_1=0$. Next, from
	\[
	\tau_1a_1b_1=(ya_1x_1-a_2x_2y)a_1b_1=ya_1b_1-a_2x_2ya_1b_1,
	\]
	$\tau_1a_1x_1=0$ if and only if $ya_1b_1=a_2x_2ya_1b_1$. Also, $ya_1b_1=a_2x_2ya_1b_1$ can be equivalently written as $ya_1b_1\in a_2x_2R$ (see Fact \ref{fact}.iii). Similarly, $c_2\tau_1=0$ if and only if $c_2y\in Rx_1$. Eventually, $ya_1x_1=a_2x_2y$ is equivalent to $ya_1b_1\in a_2x_2R$ and $c_2y\in Rx_1$.
	
	Part ii): Set $yx_1a_1-x_2a_2y=\tau_2$. From $(yx_1a_1-x_2a_2y)x_1a_1=yx_1a_1-x_2a_2(x_2a_2y+\tau_2)$, we have $\tau_2=\tau_2x_1a_1+x_2a_2\tau_2$. Next, $\prescript{\circ}{}{(x_1a_1)}\subseteq\prescript{\circ}{}{b_1}$ since $x_1a_1b_2=b_1$. This yields $\prescript{\circ}{}{x_1}\subseteq\prescript{\circ}{}{(x_1a_1)}\subseteq\prescript{\circ}{}{b_1}\subseteq\prescript{\circ}{}{x_1}$, that is, $\prescript{\circ}{}{(x_1a_1)}=\prescript{\circ}{}{b_1}$. Hence, we obtain that $\tau_2=0$ is equivalent to $\tau_2b_1=0$ and $c_2a_2\tau_2=0$. By simplifying, we get $yb_1=x_2a_2yb_1$ and $c_2a_2y=c_2a_2yx_1a_1$, that is, $yb_1\in x_2R$ and $c_2a_2y\in Rx_1a_1$.
	
	Part iii): Set $ya_1x_1-x_2a_2y=\tau_3$. From $(ya_1x_1-x_2a_2y)a_1x_1=ya_1x_1-x_2a_2(x_2a_2y+\tau_3)$, we have $\tau_3=\tau_3a_1x_1+x_2a_2\tau_3$. Hence, we obtain that $\tau_3=0$ is equivalent to $\tau_3a_1b_1=0$ and $c_2a_2\tau_3=0$. By simplifying, we get $ya_1b_1=x_2a_2ya_1b_1$ and $c_2a_2y=c_2a_2ya_1x_1$, that is, $ya_1b_1\in x_2R$ and $c_2a_2y\in Rx_1$.
	
	Part iv): Set $yx_1a_1-a_2x_2y=\tau_4$. From $(yx_1a_1-a_2x_2y)x_1a_1=yx_1a_1-a_2x_2(a_2x_2y+\tau_4)$, we have $\tau_4=\tau_4x_1a_1+a_2x_2\tau_4$. As in the proof of Parts i) and ii), we see that $(a_2x_2)^\circ=c_2^\circ$ and $\prescript{\circ}{}{(x_1a_1)}=\prescript{\circ}{}{b_1}$. Hence, we obtain that $\tau_4=0$ is equivalent to $\tau_4b_1=0$ and $c_2\tau_4=0$. By simplifying, we get $yb_1=a_2x_2yb_1$ and $c_2y=c_2yx_1a_1$, that is, $yb_1\in a_2x_2R$ and $c_2y\in Rx_1a_1$.
\end{proof}

Again, from $xab=b$ (resp. $cax=c$), we see $abR\subseteq axR$ (resp. $Rca\subseteq Rxa$).

\begin{cor}\label{cor_intertwine_ann}
Let $a_i, b_i, c_i\in R$, and suppose that $a_i$ is ann-$(b_i,c_i)$-invertible with the ann-$(b_i,c_i)$-inverse $x_i$, $i=1,2$. For any given $y\in R^1$,
\begin{enumerate}[i)]
	\item if $ya_1b_1\in a_2b_2R$ and $c_2y\in Rc_1$, then $ya_1x_1=a_2x_2y$.
	\item if $yb_1\in b_2R$ and $c_2a_2y\in Rc_1a_1$, then $yx_1a_1=x_2a_2y$.
	\item if $ya_1b_1\in b_2R$ and $c_2a_2y\in Rc_1$, then $ya_1x_1=x_2a_2y$.
	\item if $yb_1\in a_2b_2R$ and $c_2y\in Rc_1a_1$, then $yx_1a_1=a_2x_2y$.
\end{enumerate}
\end{cor}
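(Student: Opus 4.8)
The plan is to deduce this corollary directly from Theorem \ref{thm_intertwine_ann}, without any fresh computation. In each of the four parts, the hypotheses stated here are strictly stronger than the equivalent conditions supplied by the corresponding part of that theorem, so the whole argument reduces to a handful of elementary inclusions between one-sided ideals, exactly the ones flagged in the remark immediately preceding the corollary.

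First I would record the four inclusions forced by the defining equations of $x_1$ and $x_2$. From (\ref{eq_def_ann}.2) applied to $x_2$, namely $x_2a_2b_2=b_2$, we get $b_2=x_2(a_2b_2)\in x_2R$ and hence $b_2R\subseteq x_2R$; multiplying on the left by $a_2$ gives $a_2b_2=(a_2x_2)(a_2b_2)\in a_2x_2R$, whence $a_2b_2R\subseteq a_2x_2R$. Dually, from (\ref{eq_def_ann}.3) applied to $x_1$, namely $c_1a_1x_1=c_1$, we obtain $c_1\in Rx_1$ and $c_1a_1\in Rx_1a_1$, so that $Rc_1\subseteq Rx_1$ and $Rc_1a_1\subseteq Rx_1a_1$.

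Then I would match each part to the theorem. For part i), the hypotheses $ya_1b_1\in a_2b_2R$ and $c_2y\in Rc_1$ combine with $a_2b_2R\subseteq a_2x_2R$ and $Rc_1\subseteq Rx_1$ to yield $ya_1b_1\in a_2x_2R$ and $c_2y\in Rx_1$, which is precisely the right-hand side of Theorem \ref{thm_intertwine_ann}.i; hence $ya_1x_1=a_2x_2y$. Parts ii), iii), iv) are handled identically, each invoking the appropriate two of the four inclusions above: part ii) uses $b_2R\subseteq x_2R$ together with $Rc_1a_1\subseteq Rx_1a_1$; part iii) uses $b_2R\subseteq x_2R$ together with $Rc_1\subseteq Rx_1$; and part iv) uses $a_2b_2R\subseteq a_2x_2R$ together with $Rc_1a_1\subseteq Rx_1a_1$.

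There is no genuine obstacle, since the corollary is a routine specialization of Theorem \ref{thm_intertwine_ann}. The only point requiring any care is the bookkeeping, namely pairing each of the four parts with the correct two of the four inclusions, which is why I would isolate those inclusions once at the outset rather than re-deriving them in each case.
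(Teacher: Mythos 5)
Your proof is correct and follows essentially the same route as the paper: the paper's one-line justification preceding the corollary is exactly your observation that $xab=b$ and $cax=c$ force $bR\subseteq xR$, $abR\subseteq axR$, $Rc\subseteq Rx$, $Rca\subseteq Rxa$, after which each part specializes the corresponding part of Theorem \ref{thm_intertwine_ann}. Your pairing of the four inclusions with the four parts matches the intended argument.
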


\begin{thm}\label{thm_intertwine}
Let $a_i, b_i, c_i\in S$, and suppose that $a_i$ is $(b_i,c_i)$-invertible with the $(b_i,c_i)$-inverse $x_i$, $i=1,2$. Then, for any given $y\in S^1$,
\begin{enumerate}[i)]
	\item $ya_1x_1=a_2x_2y\iff ya_1b_1\in a_2b_2S\text{ and }c_2y\in Sc_1$.
	\item $yx_1a_1=x_2a_2y\iff yb_1\in b_2S\text{ and }c_2a_2y\in Sc_1a_1$.
	\item $ya_1x_1=x_2a_2y\iff ya_1b_1\in b_2S\text{ and }c_2a_2y\in Sc_1$.
	\item $yx_1a_1=a_2x_2y\iff yb_1\in a_2b_2S\text{ and }c_2y\in Sc_1a_1$.
\end{enumerate}
\end{thm}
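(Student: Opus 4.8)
The plan is to reduce each of the four equivalences to a pair of one-sided absorption identities and then verify those identities directly in $S$, exactly in the spirit of the proof of Theorem \ref{thm_comm}. Since the annihilator machinery behind Theorem \ref{thm_intertwine_ann} is unavailable in a bare semigroup, I would not pass through the subtraction of a residual $\tau$; instead I would exploit the equalities $b_iS=x_iS$ and $Sc_i=Sx_i$ furnished by Fact \ref{fact}.i, together with the idempotent-absorption criterion of Fact \ref{fact}.iii. This keeps the whole argument multiplicative and hence valid in an arbitrary semigroup.

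First I would rewrite the membership conditions in terms of the inverses $x_i$. From Fact \ref{fact}.i we have $b_2S=x_2S$ and $Sc_1=Sx_1$, whence $a_2b_2S=a_2x_2S$ and $Sc_1a_1=Sx_1a_1$; thus each right-hand side of parts i)--iv) is equivalent to its ``$x$-version''. Next, since $a_2x_2$ and $x_1a_1$ are idempotent, Fact \ref{fact}.iii turns these memberships into equalities, e.g. $ya_1b_1\in a_2x_2S\iff ya_1b_1=a_2x_2\,ya_1b_1$ and $c_2y\in Sx_1\iff c_2y=c_2ya_1x_1$, and analogously for the remaining combinations involving $x_2S$, $Sx_1a_1$. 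This is the step in which Fact \ref{fact} does essentially all of the conceptual work.

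I would then prove Part i) in full and indicate that parts ii)--iv) follow by the identical bookkeeping. For necessity, assuming $ya_1x_1=a_2x_2y$, I multiply on the right by $a_1b_1$ and use $x_1a_1b_1=b_1$ to obtain $ya_1b_1=a_2x_2ya_1b_1\in a_2x_2S=a_2b_2S$; multiplying on the left by $c_2$ and using $c_2a_2x_2=c_2$ gives $c_2y=c_2ya_1x_1\in Sx_1=Sc_1$. For sufficiency, I write $x_1=b_1s$ (from $x_1\in b_1S$) and $x_2=uc_2$ (from $x_2\in Sc_2$), and compute
\[
ya_1x_1=ya_1b_1s=a_2x_2ya_1b_1s=a_2x_2ya_1x_1,\qquad
a_2x_2y=a_2uc_2y=a_2uc_2ya_1x_1=a_2x_2ya_1x_1,
\]
so the two chains meet at the common term $a_2x_2ya_1x_1$, yielding $ya_1x_1=a_2x_2y$. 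Parts ii)--iv) repeat this scheme verbatim, inserting $b_1$ or $a_1b_1$ on the right and $c_2$ or $c_2a_2$ on the left as dictated by the statement (for instance, Part ii) multiplies by $b_1$ and by $c_2a_2$, invoking $x_1a_1b_1=b_1$ and $c_2a_2x_2=c_2$).

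I expect the only real obstacle to be organisational rather than conceptual: one must track carefully which defining relation is used on each side, namely the outer containments $x_1\in b_1S$ and $x_2\in Sc_2$ in the sufficiency argument versus the equational relations $x_1a_1b_1=b_1$ and $c_2a_2x_2=c_2$ in the necessity argument, and one must confirm that the two one-sided conditions genuinely combine, which is exactly the observation that the two displayed chains share the term $a_2x_2ya_1x_1$. Beyond this bookkeeping I anticipate no difficulty, and I would emphasise that the proof nowhere uses addition or subtraction, which is precisely why it upgrades the ring-theoretic Theorem \ref{thm_intertwine_ann} to the full semigroup setting.
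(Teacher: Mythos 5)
Your proof is correct, and it rests on the same ingredients as the paper's: the identities $b_iS=x_iS$ and $Sc_i=Sx_i$ from Fact \ref{fact}.i, the absorption criterion of Fact \ref{fact}.iii, and the representations $x_1=b_1s$, $x_2=uc_2$. The necessity direction coincides with the paper's argument. Where you genuinely diverge is in the sufficiency direction: the paper fixes explicit witnesses $p_1,q_1$ with $ya_1b_1=a_2x_2p_1$ and $c_2y=q_1x_1$, derives the bridging identity $c_2p_1=c_2a_2x_2p_1=c_2ya_1b_1=q_1x_1a_1b_1=q_1b_1$, and threads a single chain of equalities through it, whereas you convert both memberships into the absorption equalities $ya_1b_1=a_2x_2ya_1b_1$ and $c_2y=c_2ya_1x_1$ and run two chains that meet at the common middle term $a_2x_2ya_1x_1$. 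Your version dispenses with the auxiliary witnesses and the bridging identity altogether, which makes the bookkeeping for parts ii)--iv) more transparent and mechanical; the paper's version makes the interaction of the two hypotheses (via $x_1a_1b_1=b_1$ and $c_2a_2x_2=c_2$) slightly more visible in a single computation. I checked that your scheme carries over verbatim to parts ii)--iv) (for instance, in part ii) the two chains meet at $x_2a_2yx_1a_1$), so your claim that the remaining parts are identical bookkeeping is justified.
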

\begin{proof}
	The ``only if'' part is not easy to see. We give a short explanation. Suppose that, for instance, $ya_1x_1=a_2x_2y$. Then $ya_1b_1=ya_1x_1a_1b_1=a_2x_2ya_1b_1$. By Fact \ref{fact}.iii, we get $ya_1b_1\in a_2x_2S$. Thus $ya_1b_1\in a_2b_2S$ since $b_2S=x_2S$. Similarly, we have $c_2y\in Sc_1$. We now focus on the ``if'' part. From $x_1\in b_1S$ and $x_2\in Sc_2$, there exist $v,w\in S$ such that $x_1=b_1v$ and $x_2=wc_2$.
	
	Part i): First, $b_2S=x_2S$ and $Sc_1=Sx_1$ show that $ya_1b_1\in a_2x_2S$ and $c_2y\in Sx_1$. Thus, there exist $p_1,q_1\in S$ that satisfy $ya_1b_1=a_2x_2p_1$ and $c_2y=q_1x_1$. Then $c_2p_1=c_2a_2x_2p_1=c_2ya_1b_1=q_1x_1a_1b_1=q_1b_1$. We conclude that
	\[
	ya_1x_1=ya_1b_1v=a_2x_2p_1v=a_2wc_2p_1v=a_2wq_1b_1v=a_2wq_1x_1=a_2wc_2y=a_2x_2y.
	\]

	Part ii): There exist $p_2,q_2\in S$ that satisfy $yb_1=x_2p_2$ and $c_2a_2y=q_2x_1a_1$. Then $c_2p_2=c_2a_2x_2p_2=c_2a_2yb_1=q_2x_1a_1b_1=q_2b_1$. We conclude that
	\[
	yx_1a_1=yb_1va_1=x_2p_2va_1=wc_2p_2va_1=wq_2b_1va_1=wq_2x_1a_1=wc_2a_2y=x_2a_2y.
	\]

	Part iii): There exist $p_3,q_3\in S$ that satisfy $ya_1b_1=x_2p_3$ and $c_2a_2y=q_3x_1$. Then $c_2p_3=c_2a_2x_2p_3=c_2a_2ya_1b_1=q_3x_1a_1b_1=q_3b_1$. We conclude that
	\[
	ya_1x_1=ya_1b_1v=x_2p_3v=wc_2p_3v=wq_3b_1v=wq_3x_1=wc_2a_2y=x_2a_2y.
	\]

	Part iv): There exist $p_4,q_4\in S$ that satisfy $yb_1=a_2x_2p_4$ and $c_2y=q_4x_1a_1$. Then $c_2p_4=c_2a_2x_2p_4=c_2yb_1=q_4x_1a_1b_1=q_4b_1$. We conclude that
	\begin{align*}
	yx_1a_1&=yb_1va_1=a_2x_2p_4va_1=a_2wc_2p_4va_1=a_2wq_4b_1va_1\\
	&=a_2wq_4x_1a_1=a_2wc_2y=a_2x_2y.\qedhere
	\end{align*}
\end{proof}

\begin{cor}\label{cor_intertwine}
	Let $a_i, b_i, c_i\in S$, and suppose that $a_i$ is $(b_i,c_i)$-invertible with the $(b_i,c_i)$-inverse $x_i$, $i=1,2$. Then the following statements are equivalent.
	\begin{align*}
	&\mathrm{i)}~x_1a_1=a_2x_2.&\quad
	&\mathrm{ii)}~a_2b_2S=b_1S\text{ and }Sc_1a_1=Sc_2.\\
	&\mathrm{iii)}~a_2b_2\in b_1S\text{ and }c_1a_1\in Sc_2.&\quad
	&\mathrm{iv)}~b_1\in a_2b_2S\text{ and }c_2\in Sc_1a_1.
	\end{align*}
\end{cor}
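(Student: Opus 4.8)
The plan is to funnel all four statements through (i), using the $y=1$ specialization of Theorem \ref{thm_intertwine} together with Fact \ref{fact}. The free input is the equivalence (i)$\iff$(iv): taking $y=1\in S^1$ in Theorem \ref{thm_intertwine}.iv yields precisely $x_1a_1=a_2x_2\iff b_1\in a_2b_2S$ and $c_2\in Sc_1a_1$, which is (i)$\iff$(iv) verbatim. It then remains to weave in (ii) and (iii), and I would do this by establishing (i)$\iff$(iii) and a short loop (i)$\implies$(ii)$\implies$(iv).

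First I would prove (i)$\implies$(iii). Assuming $x_1a_1=a_2x_2$ and using the defining relations $x_2a_2b_2=b_2$ and $c_1a_1x_1=c_1$ from \eqref{eq_def_useful}, I compute $a_2b_2=a_2x_2a_2b_2=(x_1a_1)a_2b_2\in x_1S=b_1S$ by Fact \ref{fact}.i, giving $a_2b_2\in b_1S$; dually $c_1a_1=c_1a_1x_1a_1=c_1a_1(x_1a_1)=c_1a_1a_2x_2\in Sx_2=Sc_2$, giving $c_1a_1\in Sc_2$, which is (iii).

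The crux is the converse (iii)$\implies$(i), and I expect this to be the main obstacle, since neither containment in (iii) by itself produces the equality in (i). The idea is to apply Fact \ref{fact}.iii twice and then exploit a shared middle term. Writing $x_2\in b_2S$ and using $a_2b_2\in b_1S$ gives $a_2x_2\in a_2b_2S\subseteq b_1S=x_1S$, so Fact \ref{fact}.iii (with $x_1a_1x_1=x_1$) forces $a_2x_2=x_1a_1a_2x_2$. Dually, $x_1\in Sc_1$ together with $c_1a_1\in Sc_2$ gives $x_1a_1\in Sc_1a_1\subseteq Sc_2=Sx_2$, and Fact \ref{fact}.iii (with $x_2a_2x_2=x_2$) forces $x_1a_1=x_1a_1a_2x_2$. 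Both identities have the same right-hand side $x_1a_1a_2x_2$, so $x_1a_1=a_2x_2$, i.e.\ (i). It is precisely the sandwiching through this common term that collapses the two one-sided containments into the two-sided equality.

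Finally I would close through (ii). For (i)$\implies$(ii), the relation $a_2b_2\in b_1S$ from the (iii)-computation gives $a_2b_2S\subseteq b_1S$, while (iv) gives $b_1\in a_2b_2S$, hence $b_1S\subseteq a_2b_2S$; together $a_2b_2S=b_1S$, and dually $Sc_1a_1=Sc_2$, which is (ii). For (ii)$\implies$(iv), the only subtlety is that $S$ need not be unital, so I first note $b_1\in b_1S$: since $x_1\in b_1S$, say $x_1=b_1u$, the identity $b_1=x_1a_1b_1=b_1(ua_1b_1)$ places $b_1\in b_1S=a_2b_2S$, giving $b_1\in a_2b_2S$; dually $c_2\in Sc_2=Sc_1a_1$, which is (iv). Combining (i)$\implies$(ii)$\implies$(iv)$\iff$(i) with (i)$\iff$(iii) shows all four statements are equivalent; the absence of an identity in $S$ intervenes only in these element-versus-ideal steps, which the defining relations \eqref{eq_def_useful} resolve.
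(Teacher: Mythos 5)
Your proof is correct, and its skeleton coincides with the paper's: both hinge on the $y=1$ specialization of Theorem \ref{thm_intertwine}.iv for (i)$\iff$(iv), derive (ii) by combining the two containments coming from (iii) and (iv), and resolve the non-unital subtlety via $b_1=x_1a_1b_1\in x_1S=b_1S$ and $c_2\in Sc_2$. The one place you diverge is the step you flag as the crux, (i)$\iff$(iii): the paper obtains it for free by applying Theorem \ref{thm_intertwine}.iii with the indices $1$ and $2$ interchanged and $y=1$, which reads $a_2x_2=x_1a_1\iff a_2b_2\in b_1S$ and $c_1a_1\in Sc_2$, i.e.\ exactly (i)$\iff$(iii), so no new argument is required there. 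Your direct proof of (iii)$\implies$(i) --- pushing $a_2x_2$ into $x_1S$ and $x_1a_1$ into $Sx_2$ via Fact \ref{fact}.iii and equating both to the common middle term $x_1a_1a_2x_2$ --- is valid and is in effect a cleaner special case of the general ``if'' argument of Theorem \ref{thm_intertwine}.iii (which routes through auxiliary elements $p_3,q_3,v,w$); what the citation buys is brevity, while your computation makes the mechanism visible at $y=1$. Either way the corollary is fully established.
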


We present here a brief proof. ``i)$\iff$iii)'' follows from Theorem \ref{thm_intertwine}.iii. Also, ``i)$\iff$iv)'' follows from Theorem \ref{thm_intertwine}.iv. Then, ``i)$\implies$iii)'' and ``i)$\implies$iv)'' show ``i)$\implies$ii)''. On the other hand, note that $b_1=x_1a_1b_1\in x_1S=b_1S$. Similarly, $c_2\in Sc_2$. (See also \cite[Remark 2.2.iii]{boasso_b_2017}.) Therefore, ``iv)$\implies$i)'' gives ``ii)$\implies$i)''.

\section{Absorption laws and reverse order laws}\label{sec 4}

For any given invertible elements $\alpha,\beta$ in a unital ring, the equality
\begin{equation}
\alpha^{-1}+\beta^{-1}=\alpha^{-1}\left(\alpha+\beta\right)\beta^{-1}=\beta^{-1}\left(\alpha+\beta\right)\alpha^{-1}
\end{equation}
is known as the absorption law, and the equality
\begin{equation}
(\alpha\beta)^{-1}=\beta^{-1}\alpha^{-1}
\end{equation}
is known as the reverse order law. Using the conclusions in Section \ref{sec 3}, we can state and prove very general results related to the reverse order law for annihilator $(b,c)$-inverses.

\begin{lem}\label{lem_adsorption_sided_ann}
	Let $a_i, b_i, c_i\in R, i=1,2$. Suppose that $a_1$ is lann-$(b_1,c_1)$-invertible with a lann-$(b_1,c_1)$-inverse $x_1$, and $a_2$ is rann-$(b_2,c_2)$-invertible with a rann-$(b_2,c_2)$-inverse $x_2$. If $b_1=b_2$, then $x_1a_1x_2-x_2\in R^\circ$. Dually, if $c_1=c_2$, then $x_1a_2x_2-x_1\in \prescript{\circ}{}{R}$.
\end{lem}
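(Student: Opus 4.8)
The plan is to reuse the multiplication trick highlighted in the proof of Theorem \ref{thm_sided_ann}: from a defining identity of the form $uav=v$ one multiplies by an arbitrary ring element, reads off membership in an annihilator, and then invokes the relevant annihilator inclusion to transport the identity to a new product. Here the goal is to show that $x_1a_1x_2$ and $x_2$ coincide after left multiplication by every $r\in R$, and dually for the second assertion.

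First, for the statement under $b_1=b_2$, I would start from the defining relation of the lann-$(b_1,c_1)$-inverse, namely $x_1a_1b_1=b_1$, and rewrite it as $x_1a_1b_2=b_2$ using the hypothesis $b_1=b_2$. Left-multiplying by an arbitrary $r\in R$ gives $rx_1a_1b_2=rb_2$, that is, $(rx_1a_1-r)b_2=0$, so $rx_1a_1-r\in\prescript{\circ}{}{b_2}$. The rann-$(b_2,c_2)$ hypothesis supplies the inclusion $\prescript{\circ}{}{b_2}\subseteq\prescript{\circ}{}{x_2}$, whence $(rx_1a_1-r)x_2=0$, i.e. $rx_1a_1x_2=rx_2$. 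Since $r$ was arbitrary, $r(x_1a_1x_2-x_2)=0$ for all $r\in R$, which is exactly $x_1a_1x_2-x_2\in R^\circ$.

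The dual statement under $c_1=c_2$ is the mirror-image argument, now multiplying on the right. I would begin from $c_2a_2x_2=c_2$, rewrite it as $c_1a_2x_2=c_1$, and right-multiply by an arbitrary $r\in R$ to obtain $c_1(a_2x_2r-r)=0$, so $a_2x_2r-r\in c_1^\circ$. The lann-$(b_1,c_1)$ hypothesis gives $c_1^\circ\subseteq x_1^\circ$, hence $x_1(a_2x_2r-r)=0$, i.e. $x_1a_2x_2r=x_1r$ for every $r\in R$; this says $(x_1a_2x_2-x_1)r=0$ for all $r$, i.e. $x_1a_2x_2-x_1\in\prescript{\circ}{}{R}$.

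The only point requiring care is bookkeeping rather than any genuine obstacle: each hypothesis must be applied on its correct side. The lann condition controls only right annihilators ($c_1^\circ\subseteq x_1^\circ$), so it is the one to use after right multiplication, whereas the rann condition controls left annihilators ($\prescript{\circ}{}{b_2}\subseteq\prescript{\circ}{}{x_2}$) and is the one to invoke after left multiplication. Matching the shared datum ($b_1=b_2$ or $c_1=c_2$) with the correctly oriented one-sided inverse is precisely what makes the two halves of the conclusion land in $R^\circ$ and $\prescript{\circ}{}{R}$ respectively.
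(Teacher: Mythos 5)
Your proof is correct and follows essentially the same route as the paper's: both start from $x_1a_1b_1=b_1$, multiply on the left by an arbitrary ring element, and use $\prescript{\circ}{}{b_2}\subseteq\prescript{\circ}{}{x_2}$ to conclude $r(x_1a_1x_2-x_2)=0$ for all $r\in R$, with the dual half handled symmetrically. Your version just spells out the bookkeeping that the paper compresses into one line.
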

\begin{proof}
	It is easy to see from $x_1a_1b_1=b_1$ and $\prescript{\circ}{}{b_1}=\prescript{\circ}{}{b_2}\subseteq\prescript{\circ}{}{x_2}$ that $h(x_1a_1x_2-x_2)=0$ for all $h\in R$. Therefore, $x_1a_1x_2-x_2\in R^\circ$.
\end{proof}

\begin{lem}\label{lem_adsorption_ann}
	Let $a_i, b_i, c_i\in R$, and suppose that $a_i$ is ann-$(b_i,c_i)$-invertible with the ann-$(b_i,c_i)$-inverse $x_i$, $i=1,2$. If $b_1=b_2$, then $x_1a_1x_2=x_2$ and $x_2a_2x_1=x_1$.
	
	Dually, if $c_1=c_2$, then $x_1a_2x_2=x_1$ and $x_2a_1x_1=x_2$.
\end{lem}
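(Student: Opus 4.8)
The goal is to prove Lemma~\ref{lem_adsorption_ann}, which strengthens Lemma~\ref{lem_adsorption_sided_ann} from a ``modulo $R^\circ$'' statement to an exact equality, under the stronger hypothesis that both $a_i$ are (two-sided) ann-$(b_i,c_i)$-invertible. The plan is to prove the first assertion ($b_1=b_2$ implies $x_1a_1x_2=x_2$ and $x_2a_2x_1=x_1$) directly from the defining conditions~\eqref{eq_def_ann}, and then obtain the dual by the usual left-right symmetry of the annihilator conditions. The key observation is that when $x_i$ is a full ann-$(b_i,c_i)$-inverse, Fact~\ref{fact}.ii gives us $\prescript{\circ}{}{x_i}=\prescript{\circ}{}{b_i}$ (not merely the inclusion available for one-sided inverses), so the ``$h(\cdots)=h(\cdots)$ for all $h\in R$'' technique from the proof of Theorem~\ref{thm_sided_ann} can be applied with $h$ taken from inside $R$ itself rather than only to kill against $R^\circ$.

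First I would show $x_1a_1x_2=x_2$. The natural route is to exploit $x_1a_1b_1=b_1$ together with $b_1=b_2$. Writing $x_1a_1b_2=b_2$, I apply the intertwining technique: for every $h\in R$ the equality $x_1a_1b_2=b_2$ yields, after using $\prescript{\circ}{}{b_2}=\prescript{\circ}{}{x_2}$ (Fact~\ref{fact}.ii), that $h\,x_1a_1x_2=h\,x_2$ for all $h\in R$, so $x_1a_1x_2-x_2\in R^\circ$. This is precisely Lemma~\ref{lem_adsorption_sided_ann}. To upgrade $R^\circ$ to $0$, I use that $x_2$ is now a genuine ann-$(b_2,c_2)$-inverse, hence $x_2\in x_2R$ (equivalently $x_2=x_2a_2x_2$, from (\ref{eq_def_ann}.1)), which lets me cancel on the right: left-multiplying the congruence $x_1a_1x_2\equiv x_2$ by a suitable factor, or more cleanly substituting $x_2=x_2a_2x_2$ and applying the congruence to the trailing $x_2$, removes the $R^\circ$ ambiguity because $x_2=x_2a_2x_2$ forces any left factor lying in $R^\circ$ to vanish against $x_2$. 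Concretely, $x_1a_1x_2 = x_1a_1x_2a_2x_2 = (x_1a_1x_2)\,a_2x_2$, and since $x_1a_1x_2-x_2\in R^\circ$ annihilates $a_2x_2\in R$, we get $x_1a_1x_2=x_2a_2x_2=x_2$.

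For $x_2a_2x_1=x_1$ I would run the symmetric argument with the roles of the indices swapped: $x_2a_2b_2=b_2=b_1$ gives $x_2a_2x_1-x_1\in R^\circ$ via $\prescript{\circ}{}{b_1}=\prescript{\circ}{}{x_1}$, and then $x_1=x_1a_1x_1$ absorbs the $R^\circ$ term exactly as above. The dual statement, $c_1=c_2$ implying $x_1a_2x_2=x_1$ and $x_2a_1x_1=x_2$, follows by the right-left dual: replace left annihilators by right annihilators, use $x_i^\circ=c_i^\circ$ from Fact~\ref{fact}.ii and $cax=c$ in place of $xab=b$, and cancel using $x_i=x_ia_ix_i$ on the \emph{left} rather than the right. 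No separate computation is needed once the symmetry is invoked.

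The main obstacle is the upgrade from the modulo-$R^\circ$ equality to an exact one, since in a general (possibly non-faithful) ring $R^\circ\neq\{0\}$, so Lemma~\ref{lem_adsorption_sided_ann} alone is genuinely weaker. The essential extra ingredient that makes the exact equality go through is the idempotent-like identity $x_i=x_ia_ix_i$ guaranteed by (\ref{eq_def_ann}.1) for the two-sided inverse, which is exactly what fails for the one-sided inverses of Lemma~\ref{lem_adsorption_sided_ann}; I expect the only care needed is to check that the absorbing factor (here $a_2x_2$, respectively $x_1a_1$ in the dual) really lies in $R$ so that the $R^\circ$ (or $\prescript{\circ}{}{R}$) membership can be discharged, which is immediate.
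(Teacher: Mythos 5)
Your reduction to Lemma~\ref{lem_adsorption_sided_ann} is the right starting point and matches the paper, but the step that upgrades the congruence modulo $R^\circ$ to an exact equality is broken, and the error is about which side $R^\circ$ annihilates on. By definition $R^\circ=\{x\in R: rx=0,\ \forall r\in R\}$, so $\varepsilon:=x_1a_1x_2-x_2\in R^\circ$ tells you that $r\varepsilon=0$ for every $r\in R$, i.e.\ $\varepsilon$ is killed by \emph{left} multiplication; it does not say that $\varepsilon$ kills anything on the right. Hence your assertion that $\varepsilon$ ``annihilates $a_2x_2$'', i.e.\ $(x_1a_1x_2-x_2)a_2x_2=0$, is unjustified. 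Worse, it is circular: since $x_2=x_2a_2x_2$ one computes $(x_1a_1x_2-x_2)a_2x_2=x_1a_1(x_2a_2x_2)-x_2a_2x_2=x_1a_1x_2-x_2=\varepsilon$, so the identity you invoke is literally equivalent to the conclusion $\varepsilon=0$ you are trying to prove. The same defect recurs in your dual cancellation ``on the left''.

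What is actually needed is a two-step argument, which is what the paper does. First, use $\varepsilon\in R^\circ$ in the only legitimate way, by left-multiplying: $x_2a_2\varepsilon=0$ gives $x_2a_2x_1a_1x_2=x_2a_2x_2=x_2$. Second --- and this is the ingredient missing from your proposal --- show that $x_2a_2x_1a_1x_2=x_1a_1x_2$: from $x_2a_2x_1a_1b_1=x_2a_2b_1=x_2a_2b_2=b_2=b_1=x_1a_1b_1$ one gets $x_2a_2x_1a_1-x_1a_1\in\prescript{\circ}{}{b_1}=\prescript{\circ}{}{b_2}\subseteq\prescript{\circ}{}{x_2}$, hence $(x_2a_2x_1a_1-x_1a_1)x_2=0$. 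Combining the two steps yields $x_1a_1x_2=x_2$; the identity $x_2a_2x_1=x_1$ and the dual statements follow from the analogous pairs of steps (in the dual, the congruence lives in $\prescript{\circ}{}{R}$, so one right-multiplies and then uses the $c^\circ$-inclusions).
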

\begin{proof}
	Lemma \ref{lem_adsorption_sided_ann} gives $x_2a_2(x_1a_1x_2-x_2)=0$. This yields $x_2a_2x_1a_1x_2=x_2$. We now prove $x_2a_2x_1a_1x_2=x_1a_1x_2$. Indeed, $x_2a_2x_1a_1b_1=x_2a_2b_1=b_1=x_1a_1b_1$ which shows that $x_2a_2x_1a_1x_2=x_1a_1x_2$ as we require. Similarly, we get $x_2a_2x_1=x_1$.
\end{proof}

\begin{thm}\label{thm_adsorption_ann}
	Let $a_1, a_2, b, c\in R$, and suppose that $a_1,a_2$ are both ann-$(b,c)$-invertible with respective ann-$(b,c)$-inverses $x_1,x_2$. Then $x_1+x_2=x_1(a_1+a_2)x_2=x_2(a_1+a_2)x_1$.
\end{thm}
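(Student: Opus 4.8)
The plan is to reduce everything to Lemma \ref{lem_adsorption_ann}, since the present hypothesis is exactly the degenerate case $b_1=b_2=b$ and $c_1=c_2=c$ of that lemma. The crucial observation is that because \emph{both} $b_1=b_2$ and $c_1=c_2$ hold here, we may invoke \emph{both} conclusions of Lemma \ref{lem_adsorption_ann} at once, rather than just one of the dual halves. First I would record the two identities coming from $b_1=b_2=b$, namely $x_1a_1x_2=x_2$ and $x_2a_2x_1=x_1$, and then the two identities coming from $c_1=c_2=c$, namely $x_1a_2x_2=x_1$ and $x_2a_1x_1=x_2$.

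With these four cross-term identities in hand, the proof is a one-line distribution. Expanding the first product gives
\[
x_1(a_1+a_2)x_2 = x_1a_1x_2 + x_1a_2x_2 = x_2 + x_1,
\]
where the first summand uses the $b_1=b_2$ identity and the second uses the $c_1=c_2$ identity. Symmetrically, expanding the second product gives
\[
x_2(a_1+a_2)x_1 = x_2a_1x_1 + x_2a_2x_1 = x_2 + x_1,
\]
now using the $c_1=c_2$ identity on the first summand and the $b_1=b_2$ identity on the second. Comparing the two right-hand sides yields the claimed chain of equalities $x_1+x_2=x_1(a_1+a_2)x_2=x_2(a_1+a_2)x_1$.

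There is essentially no serious obstacle: all the real work—establishing the four mixed identities $x_1a_1x_2=x_2$, $x_2a_2x_1=x_1$, $x_1a_2x_2=x_1$, $x_2a_1x_1=x_2$ from the annihilator conditions via Lemma \ref{lem_adsorption_sided_ann}—has already been carried out. The only point deserving care is bookkeeping: making sure each of the four cross terms is matched to the correct hypothesis (the ``$a_1$ first'' terms pair with the $b$-condition, the ``$a_2$ first'' terms with the $c$-condition, and vice versa in the second product), so that both expansions collapse to $x_1+x_2$ symmetrically.
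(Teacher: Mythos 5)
Your proof is correct and is exactly the argument the paper intends: the theorem is stated without proof immediately after Lemma \ref{lem_adsorption_ann} precisely because, with $b_1=b_2=b$ and $c_1=c_2=c$, both halves of that lemma apply and the two expansions collapse to $x_1+x_2$ as you show. Nothing is missing.
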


\begin{thm}[{cf. \cite[Theorem 2.3]{chen_reverse_2017}}]\label{thm_reverse_ann}
	Let $a_i, b_i, c_i\in R$, and suppose that $a_i$ is ann-$(b_i,c_i)$-invertible with the ann-$(b_i,c_i)$-inverse $x_i$, $i=1,2$. Then $a_1a_2$ is ann-$(b_2,c_1)$-invertible with $(a_1a_2)^{\circ(b_2,c_1)}=x_2x_1$ if and only if $x_2x_1a_1a_2b_2=b_2$ and $c_1a_1a_2x_2x_1=c_1$.
\end{thm}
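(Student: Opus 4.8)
Write $A=a_1a_2$, $z=x_2x_1$, $b=b_2$, $c=c_1$ throughout; the goal is to check the five conditions of Definition~\ref{def_ann} for $z$ as an ann-$(b_2,c_1)$-inverse of $A$. The plan is to treat the two directions asymmetrically. The ``only if'' direction is immediate: if $z$ is the ann-$(b_2,c_1)$-inverse of $A$, then conditions (\ref{eq_def_ann}.2) and (\ref{eq_def_ann}.3) for $z$ are literally $zAb_2=b_2$ and $c_1Az=c_1$, so nothing is to prove. All the content lies in the ``if'' direction, where I assume these two equalities and must recover all of (\ref{eq_def_ann}.1)--(\ref{eq_def_ann}.5).

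First I would observe that (\ref{eq_def_ann}.2) and (\ref{eq_def_ann}.3) are exactly the two hypotheses, so they come for free. For the annihilator inclusions (\ref{eq_def_ann}.4) and (\ref{eq_def_ann}.5) I would not use the hypotheses at all: by Fact~\ref{fact}.ii applied to $x_2$ and to $x_1$ one has $\prescript{\circ}{}{b_2}=\prescript{\circ}{}{x_2}$ and $c_1^\circ=x_1^\circ$, and since $hx_2=0$ forces $hx_2x_1=0$ while $x_1h=0$ forces $x_2x_1h=0$, this gives at once $\prescript{\circ}{}{b_2}=\prescript{\circ}{}{x_2}\subseteq\prescript{\circ}{}{z}$ and $c_1^\circ=x_1^\circ\subseteq z^\circ$.

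The one condition that needs work is the outer-inverse identity (\ref{eq_def_ann}.1), $zAz=z$, and here I would run the annihilator technique used in the proof of Theorem~\ref{thm_sided_ann}. Starting from $c_1Az=c_1$ and rewriting $c_1=c_1a_1x_1$ via (\ref{eq_def_ann}.3) for $x_1$, I get $c_1\left(Az-a_1x_1\right)=0$, so $Az-a_1x_1\in c_1^\circ=x_1^\circ$; left-multiplying by $x_1$ and using $x_1a_1x_1=x_1$ collapses this to the exact identity $x_1Az=x_1$. Then $zAz=x_2\left(x_1Az\right)=x_2x_1=z$, which is (\ref{eq_def_ann}.1). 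The hypothesis $zAb_2=b_2$ supplies the mirror route: combined with $b_2=x_2a_2b_2$ it yields $zA-x_2a_2\in\prescript{\circ}{}{b_2}=\prescript{\circ}{}{x_2}$, hence $zAx_2=x_2$ and again $zAz=\left(zAx_2\right)x_1=z$.

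The step I expect to be delicate is precisely this last one, because $R$ is neither unital nor assumed faithful, so I cannot argue ``$h\,zAx_2=h\,x_2$ for all $h$, therefore $zAx_2=x_2$''. The point of rewriting each hypothesis as a membership in an annihilator set ($x_1^\circ$ or $\prescript{\circ}{}{x_2}$) and then multiplying by the appropriate factor is exactly to avoid any such cancellation: the identities $x_ia_ix_i=x_i$ turn the annihilator membership into an honest equality. Once $x_1Az=x_1$ (equivalently $zAx_2=x_2$) is in hand, assembling $zAz=z$ and collecting the five conditions finishes the ``if'' direction, and together with the trivial ``only if'' direction the equivalence is complete.
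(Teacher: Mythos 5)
Your proof is correct and follows essentially the same route as the paper: the hypotheses supply (\ref{eq_def_ann}.2)--(\ref{eq_def_ann}.3), the chains $\prescript{\circ}{}{b_2}\subseteq\prescript{\circ}{}{x_2}\subseteq\prescript{\circ}{}{(x_2x_1)}$ and $c_1^\circ\subseteq x_1^\circ\subseteq(x_2x_1)^\circ$ supply (\ref{eq_def_ann}.4)--(\ref{eq_def_ann}.5), and the only real content is $x_2x_1a_1a_2x_2x_1=x_2x_1$. The single (cosmetic) difference is that the paper gets this last identity by citing Corollary \ref{cor_sided_ann} together with the observation $x_2x_1=x_2x_1(a_1x_1)\in x_2x_1R$, whereas you prove it inline via the same annihilator trick ($a_1a_2x_2x_1-a_1x_1\in c_1^\circ=x_1^\circ$, hence $x_1a_1a_2x_2x_1=x_1a_1x_1=x_1$); both arguments are sound and avoid any illegitimate cancellation in the non-unital setting.
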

\begin{proof}
	The ``only if'' part is obvious. We now prove the ``if'' part. Note that $c_1^\circ\subseteq x_1^\circ\subseteq (x_2x_1)^\circ,\prescript{\circ}{}{b_2}\subseteq\prescript{\circ}{}{x_2}\subseteq\prescript{\circ}{}{(x_2x_1)}$, and $x_2x_1=x_2x_1a_1x_1\in x_2x_1R$. Combining with Corollary \ref{cor_sided_ann}, we complete the proof.
\end{proof}

\begin{thm}\label{thm_suff_reverse_ann}
	Let $a_i, b_i, c_i\in R$, and suppose that $a_i$ is ann-$(b_i,c_i)$-invertible with the ann-$(b_i,c_i)$-inverse $x_i$, $i=1,2$. If one of the following statements holds, then $a_1a_2$ is ann-$(b_2,c_1)$-invertible with $(a_1a_2)^{\circ(b_2,c_1)}=x_2x_1$.
	\begin{enumerate}[i)]
		\item $x_1a_1=a_1x_1$ and $c_1=c_2$.
		\item $x_2a_2=a_2x_2$ and $b_1=b_2$.
		\item $x_1a_1=a_2x_2$.
	\end{enumerate}
\end{thm}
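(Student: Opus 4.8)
The plan is to reduce all three cases to the criterion already established in Theorem~\ref{thm_reverse_ann}: $a_1a_2$ is ann-$(b_2,c_1)$-invertible with inverse $x_2x_1$ exactly when
\begin{equation*}
\textup{(A)}\quad x_2x_1a_1a_2b_2=b_2
\qquad\text{and}\qquad
\textup{(B)}\quad c_1a_1a_2x_2x_1=c_1 .
\end{equation*}
Thus in every case it suffices to verify (A) and (B), and the only tools I would use are the defining relations $x_ia_ix_i=x_i$, $x_ia_ib_i=b_i$, $c_ia_ix_i=c_i$ together with the idempotency of $a_ix_i$ and $x_ia_i$.

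I would dispatch iii) first, as a warm-up that needs nothing beyond substitution. From $x_1a_1=a_2x_2$ and $x_2a_2x_2=x_2$,
\[
x_2x_1a_1a_2b_2=x_2(a_2x_2)a_2b_2=(x_2a_2x_2)a_2b_2=x_2a_2b_2=b_2,
\]
which is (A); reading the same hypothesis as $a_2x_2=x_1a_1$ and using $x_1a_1x_1=x_1$ and $c_1a_1x_1=c_1$,
\[
c_1a_1a_2x_2x_1=c_1a_1x_1a_1x_1=c_1a_1x_1=c_1,
\]
which is (B).

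For i) and ii) I would first call on Lemma~\ref{lem_adsorption_ann}: when $c_1=c_2$ it supplies the cross relations $x_1a_2x_2=x_1$ and $x_2a_1x_1=x_2$, and when $b_1=b_2$ it supplies $x_1a_1x_2=x_2$ and $x_2a_2x_1=x_1$. In case i) ($x_1a_1=a_1x_1$, $c_1=c_2$) identity (A) is immediate, since commuting turns $x_2x_1a_1$ into $x_2a_1x_1=x_2$, whence $x_2x_1a_1a_2b_2=x_2a_2b_2=b_2$. The real content, and the main obstacle, is (B): the commuting relation pairs $a_1$ with $x_1$, but in $c_1a_1a_2x_2x_1$ the factor $x_1$ is stranded at the right-hand end, separated from $a_1$ by $a_2x_2$, so the relation cannot be applied as written. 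I would break the deadlock by re-expanding the outer factor via $c_1=c_1a_1x_1$, which manufactures an $x_1$ adjacent to $a_1$ that can be commuted, after which the cross relation $x_1a_2x_2=x_1$ clears $a_2x_2$:
\[
c_1a_1a_2x_2x_1=c_1a_1x_1a_1a_2x_2x_1=c_1a_1^2x_1a_2x_2x_1=c_1a_1^2x_1^2=c_1a_1x_1=c_1,
\]
where the last step uses $a_1^2x_1^2=(a_1x_1)^2=a_1x_1$ (by commuting and $x_1a_1x_1=x_1$) and then $c_1a_1x_1=c_1$. Case ii) ($x_2a_2=a_2x_2$, $b_1=b_2$) is the exact mirror image under reversing all products together with the swaps $1\leftrightarrow2$, $b\leftrightarrow c$, and left $\leftrightarrow$ right annihilators: there (B) is the easy one, namely $c_1a_1a_2x_2x_1=c_1a_1(x_2a_2x_1)=c_1a_1x_1=c_1$ via commuting and $x_2a_2x_1=x_1$, while (A) is obtained from the mirror of the displayed computation---re-expand $b_2=x_2a_2b_2$, commute $a_2x_2=x_2a_2$, apply $x_1a_1x_2=x_2$, and collapse $x_2^2a_2^2=(x_2a_2)^2=x_2a_2$ to reach $x_2a_2b_2=b_2$. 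I would therefore write case i) out in full and obtain case ii) by this symmetry.
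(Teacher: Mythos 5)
Your proposal is correct and follows essentially the same route as the paper: reduce each case to the two identities of Theorem~\ref{thm_reverse_ann} and verify them using the cross relations of Lemma~\ref{lem_adsorption_ann} together with the commuting hypothesis. The only (immaterial) difference is in part~i), identity (B), where the paper first invokes Theorem~\ref{thm_comm_ann} to write $c_1a_1=tx_1$ before computing, whereas you achieve the same effect by the direct substitution $c_1=c_1a_1x_1$ and the collapse $a_1^2x_1^2=a_1x_1$.
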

\begin{proof}
	i) holds: By Lemma \ref{lem_adsorption_ann}, $x_2a_2b_2=b_2$ implies $(x_2a_1x_1)a_2b_2=b_2$. Therefore, $x_2x_1a_1a_2b_2=b_2$. On the other hand, Theorem \ref{thm_comm_ann} shows $c_1a_1\in Rx_1$ since $x_1a_1=a_1x_1$. Assume that $c_1a_1=tx_1$, where $t\in R$. Then
	\[
	c_2=c_1=c_1a_1x_1=tx_1x_1=t(x_1a_2x_2)x_1=(tx_1)a_2x_2x_1=(c_1a_1)a_2x_2x_1=c_2a_1a_2x_2x_1.
	\]
	The result then follows from Theorem \ref{thm_reverse_ann}.
	
	ii) holds: The proof is analogous to that of the ``i)'' case.
	
	iii) holds: Clearly, $c_1=c_1a_1x_1=c_1a_1(x_1a_1x_1)=c_1a_1a_2x_2x_1$ and $b_2=x_2a_2b_2=(x_2a_2x_2)a_2b_2=x_2x_1a_1a_2b_2$, as required to complete the proof.
\end{proof}

By Corollary \ref{cor_intertwine_ann}, $a_2b_2\in b_1R$ and $c_1a_1\in Rc_2$ show $x_1a_1=a_2x_2$. Therefore, Theorem \ref{thm_suff_reverse_ann} gives us an extension of \cite[Theorems 3.5, 3.8, and 3.12]{xu_centralizers_2019}.

\begin{thm}\label{thm_suff_reverse}
	Let $a_i, b_i, c_i\in S$, and suppose that $a_i$ is $(b_i,c_i)$-invertible with the $(b_i,c_i)$-inverse $x_i$, $i=1,2$. If one of the following statements holds, then $a_1a_2$ is $(b_2,c_1)$-invertible with $(a_1a_2)^{(b_2,c_1)}=x_2x_1$.
	\begin{enumerate}[i)]
		\item $x_1a_1=a_1x_1$ and $c_1=c_2$.
		\item $x_2a_2=a_2x_2$ and $b_1=b_2$.
		\item $x_1a_1=a_2x_2$.
	\end{enumerate}
\end{thm}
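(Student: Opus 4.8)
The plan is to verify directly that $x_2x_1$ satisfies the four conditions of \eqref{eq_def_useful} for being the $(b_2,c_1)$-inverse of $a_1a_2$; since $(b,c)$-inverses are unique, this simultaneously identifies $x_2x_1$ as $(a_1a_2)^{(b_2,c_1)}$. The first observation is that conditions (\ref{eq_def_useful}.3) and (\ref{eq_def_useful}.4) come for free: since $x_2\in b_2S$ we have $x_2x_1\in b_2S$, and since $x_1\in Sc_1$ we have $x_2x_1\in Sc_1$. Consequently the whole problem collapses to the two identities
\[
\mathrm{1)}~x_2x_1a_1a_2b_2=b_2,\qquad\mathrm{2)}~c_1a_1a_2x_2x_1=c_1,
\]
which play precisely the role that $x_2x_1a_1a_2b_2=b_2$ and $c_1a_1a_2x_2x_1=c_1$ played in Theorem \ref{thm_reverse_ann} for the annihilator case. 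Thus the semigroup statement should follow from the same manipulations as Theorem \ref{thm_suff_reverse_ann}, provided I check that none of those manipulations secretly used the annihilator inclusions.

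The tool I expect to need is the absorption identities of Lemma \ref{lem_adsorption_ann}, which I first re-derive in the plain $(b,c)$-inverse setting. If $b_1=b_2$, then writing $x_2=b_1s$ (legitimate since $x_2\in b_2S=b_1S$) and using $x_1a_1b_1=b_1$ gives $x_1a_1x_2=x_1a_1b_1s=b_1s=x_2$; symmetrically $x_2a_2x_1=x_1$. Dually, if $c_1=c_2$ then $x_1a_2x_2=x_1$ and $x_2a_1x_1=x_2$, proved by writing $x_1=sc_2$ and using $c_2a_2x_2=c_2$. These rest only on the membership conditions and the defining equations, so they are available here verbatim.

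With these in hand the three cases run as in Theorem \ref{thm_suff_reverse_ann}. Case iii) needs no absorption at all: from $x_ia_ix_i=x_i$ and $x_1a_1=a_2x_2$ one gets $b_2=x_2a_2b_2=(x_2a_2x_2)a_2b_2=x_2(a_2x_2)a_2b_2=x_2(x_1a_1)a_2b_2=x_2x_1a_1a_2b_2$, and dually $c_1=c_1a_1x_1=c_1a_1(x_1a_1x_1)=c_1a_1a_2x_2x_1$. In case i), identity (1) follows from the commutation $x_1a_1=a_1x_1$ together with the absorption $x_2a_1x_1=x_2$; for (2) I would first turn $c_1a_1x_1=c_1$ and the commutation into $c_1a_1=c_1a_1x_1a_1=c_1a_1^2x_1\in Sx_1$, and then substitute the absorption $x_1=x_1a_2x_2$ exactly as in the annihilator proof. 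Case ii) is dual, with the roles of $b$/$c$, left/right, and the two indices interchanged, the key intermediate step there being $a_2b_2=x_2a_2^2b_2\in x_2S$.

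The main obstacle — really the only point to watch — is that the semigroup argument cannot lean on the annihilator inclusions $\prescript{\circ}{}{b}\subseteq\prescript{\circ}{}{x}$, $c^\circ\subseteq x^\circ$ that powered Theorem \ref{thm_reverse_ann}; every such step must instead be routed through the membership facts $b_iS=x_iS$, $Sc_i=Sx_i$ of Fact \ref{fact}.i and the ``$t\in xS\iff t=xat$'' criterion of Fact \ref{fact}.iii. The one genuinely non-cosmetic move is the passage $c_1a_1\in Sx_1$ (and its dual $a_2b_2\in x_2S$ in case ii), which is exactly where the commutation hypothesis is consumed; once that membership is secured, the remaining work is the same bracket-reshuffling as in Theorem \ref{thm_suff_reverse_ann}.
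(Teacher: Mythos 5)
Your proposal is correct and follows exactly the route the paper indicates (and leaves to the reader): verify the two product identities of the semigroup reverse-order criterion, after re-proving the absorption identities of Lemma \ref{lem_adsorption_ann} using the memberships $x_i\in b_iS\cap Sc_i$ in place of the annihilator inclusions. All three case computations check out, so you have merely filled in the details the paper omits.
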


Since it can be proved in a similar way as Theorem \ref{thm_suff_reverse_ann} (by using Theorem \ref{thm_comm}, \cite[Theorem 2.3]{chen_reverse_2017}, and an improved version of \cite[Lemma 2.1]{xu_centralizers_2019}), we skip the proof and refer readers to. Recall Corollary \ref{cor_intertwine}. These statements above are equivalent, respectively, to the following conditions.
\begin{enumerate}[i)]
	\item $a_1b_1S=b_1S,Sc_1a_1=Sc_1$, and $c_1=c_2$.
	\item $a_2b_2S=b_2S,Sc_2a_2=Sc_2$, and $b_1=b_2$.
	\item $a_2b_2S=b_1S$ and $Sc_1a_1=Sc_2$.
\end{enumerate}
Accordingly, there is an interesting question implicit here.

\begin{op}
	Characterize, if possible, equivalent conditions for the reverse order law, $(a_1a_2)^{(b_2,c_1)}=a_2^{(b_2,c_2)}a_1^{(b_1,c_1)}$, to hold by only using the relations between $a_i,b_i,c_i$, $i=1,2$.
\end{op}

\section{Cline's formula}\label{sec 5}

In this section, we generalize Cline's formula to the case of annihilator $(b,c)$-inverses. The ideas arose most directly from \cite[Section 6]{drazin_left_2016} and \cite[Theorem 2.1]{mosic_note_2015}.

\begin{thm}\label{thm_cline_ann}
	Let $a_1,a_2,b,c\in R$, and suppose that $(a_1a_2)^{n+1}$ is ann-$(b,c)$-invertible with the ann-$(b,c)$-inverse $x$ for some positive integer $n$, then $(a_2a_1)^{n}$ is ann-$(a_2b,ca_1)$-invertible with the ann-$(a_2b,ca_1)$-inverse $a_2xa_1$.
\end{thm}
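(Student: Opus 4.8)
The plan is to verify directly that $y:=a_2xa_1$ satisfies all five conditions of Definition \ref{def_ann} for the element $(a_2a_1)^n$ with respect to the pair $(a_2b,ca_1)$; once the five conditions hold, the phrasing ``\emph{the} ann-$(a_2b,ca_1)$-inverse'' is justified by the uniqueness in Theorem \ref{thm_unique_ann}. The whole argument rests on the two Cline-type commutation identities
\[
a_1(a_2a_1)^k=(a_1a_2)^ka_1,\qquad (a_2a_1)^ka_2=a_2(a_1a_2)^k,
\]
valid for every $k\geqslant 0$; both follow by a one-line induction from the base case $a_1a_2a_1=(a_1a_2)a_1=a_1(a_2a_1)$. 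Writing $P:=a_1a_2$ and $Q:=a_2a_1$, these yield in particular the single identity $a_1Q^na_2=P^na_1a_2=P^{n+1}$, which is the engine behind all three equational checks, since the leftover factor $a_1a_2$ supplies exactly the ``$+1$'' in the exponent.

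First I would dispose of the three multiplicative conditions (\ref{eq_def_ann}.1)--(\ref{eq_def_ann}.3) by collapsing each expression, via $a_1Q^na_2=P^{n+1}$, onto the corresponding hypothesis on $x$. Concretely, for (\ref{eq_def_ann}.2),
\[
y\,Q^n(a_2b)=a_2x(a_1Q^na_2)b=a_2xP^{n+1}b=a_2b,
\]
using $xP^{n+1}b=b$; symmetrically $(ca_1)Q^ny=c(a_1Q^na_2)xa_1=cP^{n+1}xa_1=ca_1$ from $cP^{n+1}x=c$, giving (\ref{eq_def_ann}.3); and $yQ^ny=a_2x(a_1Q^na_2)xa_1=a_2xP^{n+1}xa_1=a_2xa_1=y$ from $xP^{n+1}x=x$, giving (\ref{eq_def_ann}.1).

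Next I would handle the two annihilator inclusions (\ref{eq_def_ann}.4)--(\ref{eq_def_ann}.5). These reduce cleanly once one absorbs the outer factors across the containments. For (\ref{eq_def_ann}.4), if $h\in\prescript{\circ}{}{(a_2b)}$ then $(ha_2)b=0$, so $ha_2\in\prescript{\circ}{}{b}\subseteq\prescript{\circ}{}{x}$ by hypothesis; hence $ha_2x=0$ and therefore $hy=ha_2xa_1=0$, proving $\prescript{\circ}{}{(a_2b)}\subseteq\prescript{\circ}{}{y}$. Dually, for (\ref{eq_def_ann}.5), if $h\in(ca_1)^\circ$ then $c(a_1h)=0$, so $a_1h\in c^\circ\subseteq x^\circ$, whence $xa_1h=0$ and $yh=a_2xa_1h=0$.

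Since every individual step is a short substitution, I do not expect a genuine obstacle; the only thing to watch is the bookkeeping of which power of $P$ versus $Q$ appears and on which side $a_1$ and $a_2$ are absorbed. The commutation identities are set up precisely so that converting $a_1Q^na_2$ into $P^{n+1}$ both raises the exponent and strips away the conjugating factors, after which all five defining conditions fall out directly. The mildest care is needed in the annihilator inclusions, to ensure that left-multiplication by $a_2$ (respectively right-multiplication by $a_1$) carries the containment $\prescript{\circ}{}{b}\subseteq\prescript{\circ}{}{x}$ (respectively $c^\circ\subseteq x^\circ$) over to the new pair, which the computations above confirm.
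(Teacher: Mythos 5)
Your proposal is correct and follows essentially the same route as the paper: both verify the five conditions of Definition \ref{def_ann} directly, using the identity $a_1(a_2a_1)^na_2=(a_1a_2)^{n+1}$ to collapse each expression onto the corresponding hypothesis on $x$, and transporting the annihilator inclusions by absorbing the outer factors $a_2$ and $a_1$. The only cosmetic difference is that you make the commutation identity and the annihilator bookkeeping explicit where the paper leaves them as one-line computations.
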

\begin{proof}
	We conclude that
	\begin{align*}
	&(a_2xa_1)(a_2a_1)^n(a_2xa_1)=a_2x(a_1a_2)^{n+1}xa_1=a_2xa_1,\\
	&(a_2xa_1)(a_2a_1)^n(a_2b)=a_2x_1(a_1a_2)^{n+1}b=a_2b,\\
	&(ca_1)(a_2a_1)^n(a_2xa_1)=c(a_1a_2)^{n+1}xa_1=ca_1,\\
	&\prescript{\circ}{}{(a_2b)}=\prescript{\circ}{}{(a_2x)}\subseteq\prescript{\circ}{}{(a_2xa_1)},\\
	&(ca_1)^\circ=(xa_1)^\circ\subseteq(a_2xa_1)^\circ.\qedhere
	\end{align*}
\end{proof}

For notational convenience, the bicommutant of $a\in R$ is defined as follows.
\[
\operatorname{comm}^2\{a\}:=\{r\in R:rh=hr\text{ for all }h\in R\text{ such that }ha=ah\}.
\]

\begin{prop}\label{prop_cline_ann}
	Let $a_1,a_2,b,c\in R$, and suppose that $(a_1a_2)^{n+1}$ is ann-$(b,c)$-invertible with the ann-$(b,c)$-inverse $x$ for some positive integer $n$.
	\begin{enumerate}[i)]
		\item If $x(a_1a_2)^n=(a_1a_2)^nx$, then $a_2xa_1(a_2a_1)^n=(a_2a_1)^na_2xa_1$.
		\item If $x\in\operatorname{comm}^2\{a_1a_2\}$, then $a_2xa_1\in\operatorname{comm}^2\{a_2a_1\}$.
	\end{enumerate}
\end{prop}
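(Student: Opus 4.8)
The plan is to reduce everything to Theorem~\ref{thm_cline_ann}. Writing $y:=a_2xa_1$, $A:=a_2a_1$ and $B:=a_1a_2$, that theorem already tells us $y$ is the ann-$(a_2b,ca_1)$-inverse of $A^n$, so in particular the outer-inverse identity $yA^ny=y$ is available. I will also use the two elementary shift identities $a_1A^n=B^na_1$ and $a_2B^n=A^na_2$ (that is, $a_1(a_2a_1)^n=(a_1a_2)^na_1$ and $a_2(a_1a_2)^n=(a_2a_1)^na_2$), which follow from associativity alone.

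For part i) the hypothesis reads $xB^n=B^nx$ and the claim is exactly $yA^n=A^ny$. This is a one-line computation with no obstacle: push $a_1$ through the power, commute $x$ past $B^n$, then push $a_2$ back, i.e. $a_2xa_1A^n=a_2xB^na_1=a_2B^nxa_1=A^na_2xa_1$.

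For part ii) I would first record the standard conjugation remark: if $h$ commutes with $A$, then $a_1ha_2$ commutes with $B$, since $(a_1ha_2)B=a_1h(a_2a_1)a_2=a_1(a_2a_1)ha_2=B(a_1ha_2)$. Because $x\in\operatorname{comm}^2\{B\}$, this forces $x(a_1ha_2)=(a_1ha_2)x$, and conjugating by $a_2(\cdot)a_1$ yields $yhA=Ahy$. Taking $h=a_1a_2$ (which trivially commutes with $B$) also gives $xB=Bx$, whence the part~i) computation with a single power shows $yA=Ay$; thus $y$ already commutes with $A$, hence with every $A^k$.

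The main obstacle is that $yhA=Ahy$ is \emph{not} enough on its own: combined with $hA=Ah$ and $yA=Ay$ it only yields $A(yh-hy)=0$ and $(yh-hy)A=0$, and since $A$ need not be left- or right-cancellable we cannot conclude $yh=hy$ directly. The resolution is to feed this two-sided annihilation into the outer-inverse relation. Setting $z:=yh-hy$ and using $yA^ny=y$ together with $A^nh=hA^n$, the cross terms collapse (their difference is $y(hA^n-A^nh)y=0$) and one obtains the self-referential identity
\[
z=yA^nz+zA^ny.
\]
From $A(yh-hy)=0=(yh-hy)A$ we get $A^nz=0=zA^n$, so both summands on the right vanish and $z=0$, i.e. $yh=hy$. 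Since $h$ was an arbitrary element commuting with $A$, this establishes $a_2xa_1\in\operatorname{comm}^2\{a_2a_1\}$.
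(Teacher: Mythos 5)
Your proof is correct, but it takes a genuinely different route from the one in the paper. Part i) is the same one-line shift computation the paper treats as obvious. For part ii), writing $A=a_2a_1$ and $y=a_2xa_1$ as you do, the paper proves $yh=hy$ by a single chain of equalities: it first rewrites $y=a_2x^2(a_1a_2)^{n+1}a_1$ (using $x=x(a_1a_2)^{n+1}x$ together with $x(a_1a_2)^k=(a_1a_2)^kx$), then shuttles $h$ through the powers of $a_2a_1$ and past $x^2$ via the same conjugation remark you record, namely that $a_1ha_2$ commutes with $a_1a_2$ whenever $h$ commutes with $a_2a_1$. You instead invoke Theorem \ref{thm_cline_ann} for the outer-inverse identity $yA^ny=y$, reduce the problem to the two-sided annihilation $Az=zA=0$ for $z=yh-hy$, and then kill $z$ with the identity $z=yA^nz+zA^ny$. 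Both arguments rest on exactly the same raw ingredients (the conjugation remark and the outer-inverse relation inherited from $x$); the paper's version is shorter and self-contained, while yours makes the logical structure more transparent --- in particular it isolates the genuine obstacle (non-cancellability of $A$) and shows explicitly that the outer-inverse relation is what overcomes it. I checked the key steps: $yhA=Ahy$ does follow by conjugating $x(a_1ha_2)=(a_1ha_2)x$ by $a_2(\cdot)a_1$; the cross terms in $yA^nz+zA^ny$ cancel because $hA^n=A^nh$; and $A^nz=zA^n=0$ follows from $Az=zA=0$ since $n\geqslant 1$. (One cosmetic point: to get $xB=Bx$ you are simply applying the definition of $\operatorname{comm}^2\{a_1a_2\}$ to the element $a_1a_2$ itself, not the conjugation remark, so the phrasing ``taking $h=a_1a_2$'' is slightly misleading but harmless.)
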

\begin{proof}
	Part i) is obvious. Consider Part ii). For any $h\in R$ such that $h(a_2a_1)=(a_2a_1)h$, we first show $a_1ha_2(a_1a_2)=(a_1a_2)a_1ha_2$. Indeed,
	\[
	a_1ha_2(a_1a_2)=a_1h(a_2a_1)a_2=a_1(a_2a_1)ha_2=(a_1a_2)a_1ha_2.
	\]
	Thus $x(a_1ha_2)=(a_1ha_2)x$. Also, note that $x\in\operatorname{comm}^2\{a_1a_2\}$ implies $x(a_1a_2)^k=(a_1a_2)^kx$ for all positive integer k. We then conclude that
	\begin{align*}
	(a_2xa_1)h&=a_2x^2(a_1a_2)^{n+1}a_1h\\
	&=a_2x^2a_1(a_2a_1)^{n+1}h\\
	&=a_2x^2a_1h(a_2a_1)^{n+1}\\
	&=a_2x^2(a_1ha_2)(a_1a_2)^na_1\\
	&=a_2(a_1ha_2)x^2(a_1a_2)^na_1\\
	&=ha_2(a_1a_2)x^2(a_1a_2)^na_1\\
	&=h(a_2xa_1).\qedhere
	\end{align*}
\end{proof}

\begin{cor}\label{cor_cline_ann}
	Let $a_1,a_2,b,c\in R$, and suppose that $a_1a_2$ is ann-$(b,c)$-invertible with the ann-$(b,c)$-inverse $x$. If $x(a_1a_2)=(a_1a_2)x$, then $a_2a_1$ is ann-$(a_2b,ca_1)$-invertible with the ann-$(a_2b,ca_1)$-inverse $a_2x^2a_1$.
	
	In particular, if $(a_1a_2)b\in b\mu R$ and $c(a_1a_2)\in R\nu c$, where $\mu,\nu\in R^1$, then $a_2a_1$ is ann-$(a_2b\mu,\nu ca_1)$-invertible with the ann-$(a_2b\mu,\nu ca_1)$-inverse $a_2x^2a_1$.
\end{cor}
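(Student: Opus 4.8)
The plan is to reduce both assertions to Theorem~\ref{thm_cline_ann} with $n=1$, the only genuine work being to upgrade the ann-$(b,c)$-inverse $x$ of $a_1a_2$ to an ann-inverse of $(a_1a_2)^2$. Write $A:=a_1a_2$. First I would use the commutativity $xA=Ax$ together with the outer identity $xAx=x$ coming from (\ref{eq_def_ann}.1) to obtain $Ax^2=x^2A=x$. With this in hand I would check that $x^2$ is the ann-$(b,c)$-inverse of $A^2$: indeed $x^2A^2x^2=(x^2A)(Ax^2)=x^2$, $x^2A^2b=(x^2A)Ab=xAb=b$, and $cA^2x^2=cA(Ax^2)=cAx=c$, while (\ref{eq_def_ann}.4)--(\ref{eq_def_ann}.5) hold trivially since $\prescript{\circ}{}{b}\subseteq\prescript{\circ}{}{x}\subseteq\prescript{\circ}{}{(x^2)}$ and $c^\circ\subseteq x^\circ\subseteq(x^2)^\circ$. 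Theorem~\ref{thm_cline_ann} with $n=1$ then gives that $a_2a_1$ is ann-$(a_2b,ca_1)$-invertible with inverse $a_2x^2a_1$, which is the first assertion.

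For the ``in particular'' statement I would first recover commutativity from the hypotheses. Since $\mu,\nu\in R^1$ we have $Ab\in b\mu R\subseteq bR$ and $cA\in R\nu c\subseteq Rc$, so Corollary~\ref{cor_comm_ann}, applied with $a_1=a_2=A$, $b_1=b_2=b$, $c_1=c_2=c$ and $y=A$, forces $xA=Ax$. It then suffices to prove that $x^2$ is the ann-$(b\mu,\nu c)$-inverse of $A^2$ and to invoke Theorem~\ref{thm_cline_ann} with $n=1$ and $(b,c)$ replaced by $(b\mu,\nu c)$; its output $\bigl(a_2(b\mu),(\nu c)a_1\bigr)=(a_2b\mu,\nu ca_1)$ is exactly the pair claimed. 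Conditions (\ref{eq_def_ann}.1)--(\ref{eq_def_ann}.3) for $x^2$ follow at once from the first-part computation by right-multiplying $x^2A^2b=b$ by $\mu$ and left-multiplying $cA^2x^2=c$ by $\nu$.

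The main obstacle is thus conditions (\ref{eq_def_ann}.4)--(\ref{eq_def_ann}.5), namely $\prescript{\circ}{}{(b\mu)}\subseteq\prescript{\circ}{}{(x^2)}$ and $(\nu c)^\circ\subseteq(x^2)^\circ$. Because $\prescript{\circ}{}{(x^2)}=\prescript{\circ}{}{x}=\prescript{\circ}{}{b}$ and $(x^2)^\circ=x^\circ=c^\circ$ (Fact~\ref{fact}.ii together with $x=x^2A=Ax^2$), the real task is to pass from $b\mu$ back to $b$ and from $\nu c$ back to $c$, and this is precisely where commutativity is indispensable. The key identities are $b=Axb$ and $c=cxA$, obtained from $xAb=b$, $cAx=c$ and $xA=Ax$. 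Concretely, writing $Ab=b\mu r$, if $hb\mu=0$ then $hAb=(hb\mu)r=0$, so $hA\in\prescript{\circ}{}{b}=\prescript{\circ}{}{x}$ by Fact~\ref{fact}.ii, whence $hAx=0$ and therefore $hb=(hAx)b=0$; writing $cA=s\nu c$, the dual computation shows $\nu ct=0\Rightarrow cAt=0\Rightarrow xAt=0\Rightarrow ct=(cxA)t=c(xAt)=0$. These give the two required inclusions and complete the verification.
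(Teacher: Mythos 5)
Your proof is correct and follows essentially the same route as the paper's: reduce to Theorem~\ref{thm_cline_ann} with $n=1$ by showing $x^2$ is an annihilator inverse of $(a_1a_2)^2$, recover $x(a_1a_2)=(a_1a_2)x$ from Corollary~\ref{cor_comm_ann} in the second part, and use $(a_1a_2)b\in b\mu R$ to pass from the annihilator of $b\mu$ back to that of $b$. The only cosmetic difference is that you package the second part as ``$x^2$ is the ann-$(b\mu,\nu c)$-inverse of $(a_1a_2)^2$'' and re-apply Theorem~\ref{thm_cline_ann}, whereas the paper (which cites Theorem~\ref{thm_suff_reverse_ann} for the squaring step) checks the two final annihilator inclusions on $a_2x^2a_1$ directly; the underlying computations coincide.
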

\begin{proof}
	Since $x(a_1a_2)=(a_1a_2)x$, Theorem \ref{thm_suff_reverse_ann} shows that $(a_1a_2)^2$ is also ann-$(b,c)$-invertible and its ann-$(b,c)$-inverse is $x^2$. The first part follows from Theorem \ref{thm_cline_ann} above.
	
	We now focus on the second part. Indeed, we only need to verify that $\prescript{\circ}{}{(a_2b\mu)}\subseteq\prescript{\circ}{}{(a_2x^2a_1)}$ and $(\nu ca_1)^\circ\subseteq (a_2x^2a_1)^\circ$. By Corollary \ref{cor_comm_ann}, $(a_1a_2)b\in b\mu R$ and $c(a_1a_2)\in R\nu c$ imply $x(a_1a_2)=(a_1a_2)x$, and, therefore, the other conditions in \eqref{eq_def_ann} are straightforward. For any $t\in\prescript{\circ}{}{(a_2b\mu)}$, we have $ta_2b\mu=0$. Combining with $(a_1a_2)b\in b\mu R$, we get $ta_2(a_1a_2b)=0$. This yields $ta_2a_1a_2x=0$. Note that $a_1a_2x^2=x(a_1a_2)x=x$. Hence $ta_2x=0$, which shows $t\in\prescript{\circ}{}{(a_2x)}\subseteq\prescript{\circ}{}{(a_2x^2a_1)}$. This complete the verification of $\prescript{\circ}{}{(a_2b\mu)}\subseteq\prescript{\circ}{}{(a_2x^2a_1)}$. Similarly, we get $(\nu ca_1)^\circ\subseteq (a_2x^2a_1)^\circ$.
\end{proof}

\begin{thm}\label{thm_cline}
	Let $a_1,a_2,b,c\in S$, and suppose that $a_1a_2$ is $(b,c)$-invertible with the $(b,c)$-inverse $x$. If $(a_1a_2)b\in b\mu S$ and $c(a_1a_2)\in S\nu c$, where $\mu,\nu\in S^1$, then $a_2a_1$ is $(a_2b\mu,\nu ca_1)$-invertible with the $(a_2b\mu,\nu ca_1)$-inverse $a_2x^2a_1$.
	
	In particular, if $x(a_1a_2)=(a_1a_2)x$ and $b=c=d\in S$, then $a_2a_1$ is $(a_2da_1,a_2da_1)$-invertible with the $(a_2da_1,a_2da_1)$-inverse $a_2x^2a_1$.
\end{thm}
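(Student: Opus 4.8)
The plan is to verify the four conditions of \eqref{eq_def_useful} directly, showing that $a_2x^2a_1$ is the $(a_2b\mu,\nu ca_1)$-inverse of $a_2a_1$; this mirrors the annihilator argument of Corollary~\ref{cor_cline_ann}, with the one-sided memberships $x\in bS$ and $x\in Sc$ now playing the role of the annihilator inclusions there. Throughout I write $A:=a_1a_2$, so that $x$ satisfies $xAx=x$, $xAb=b$, $cAx=c$, $x\in bS$ and $x\in Sc$. First I would establish the commutativity $xA=Ax$. Since $Ab\in b\mu S\subseteq bS$ and $cA\in S\nu c\subseteq Sc$, I apply Theorem~\ref{thm_comm} with both elements equal to $A$, both pairs equal to $(b,c)$, both inverses equal to $x$, and intertwiner $y=A$: the equality $cA\,Ab=cA\,Ab$ is automatic, while the two remaining hypotheses are exactly $Ab\in bS$ and $cA\in Sc$, so the theorem yields $Ax=xA$. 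Combined with $xAx=x$ this gives the idempotent relation $(xA)^2=xAxA=xA$, whence $x^2A^2=(xA)^2=xA$ and dually $A^2x^2=Ax$, as well as $x=Ax^2=x^2A$.

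These identities dispatch \eqref{eq_def_useful}.1 and \eqref{eq_def_useful}.2 at once: $(a_2x^2a_1)(a_2a_1)(a_2b\mu)=a_2(x^2A^2)b\mu=a_2(xAb)\mu=a_2b\mu$, and symmetrically $(\nu ca_1)(a_2a_1)(a_2x^2a_1)=\nu(cA^2x^2)a_1=\nu(cAx)a_1=\nu ca_1$. The crux is \eqref{eq_def_useful}.3 and \eqref{eq_def_useful}.4, and the decisive observation is that the hypothesis upgrades $x\in bS$ to $x\in b\mu S$: writing $x=bs$ and $Ab=b\mu t$ gives $Ax=(Ab)s\in b\mu S$, and then $x=Ax^2=(Ax)x\in b\mu S$. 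Consequently $a_2x^2a_1=(a_2x)(xa_1)\in(a_2b\mu)S$; the dual argument from $x\in Sc$ and $cA\in S\nu c$ gives $x\in S\nu c$ and $a_2x^2a_1\in S(\nu ca_1)$, completing the first part. I expect this bootstrapping of the containment to be the main obstacle, since the naive $x\in bS$ is too weak and the promotion to $b\mu S$ relies on re-inserting $x=Ax^2$.

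For the ``in particular'' clause, $b=c=d$ and $Ax=xA$ are assumed, so $x$ is the inverse of $A$ along $d$ (cf.\ Definition~\ref{def_mary}) and satisfies $xAd=d$, $dAx=d$, $x\in dS\cap Sd$. Conditions \eqref{eq_def_useful}.1 and \eqref{eq_def_useful}.2 follow verbatim as above, now $a_2(x^2A^2)da_1=a_2(xAd)a_1=a_2da_1$ and its dual. The subtlety to flag is that this clause is \emph{not} literally the $\mu=a_1,\nu=a_2$ instance of the first part: the hypothesis $Ad\in da_1S$ that the first part would require need not hold under commutativity alone (commutativity only forces $Ad\in dS$). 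Instead I would feed the inverse-along identities back in directly for \eqref{eq_def_useful}.3 and \eqref{eq_def_useful}.4: from $d=dAx=da_1a_2x$ one gets $a_2d=(a_2da_1)(a_2x)\in(a_2da_1)S$, and writing $x^2=dw$ (possible since $x\in dS$) yields $a_2x^2a_1=(a_2d)(wa_1)=(a_2da_1)(a_2xwa_1)\in(a_2da_1)S$; the dual identity $d=xAd$ gives $da_1\in S(a_2da_1)$ and hence $a_2x^2a_1\in S(a_2da_1)$, which finishes the proof.
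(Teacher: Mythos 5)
Your proof is correct, and its skeleton coincides with the paper's: both derive $x(a_1a_2)=(a_1a_2)x$ from Theorem~\ref{thm_comm} applied with $y=a_1a_2$, and both then verify (\ref{eq_def_useful}.1) and (\ref{eq_def_useful}.2) via $x^2A^2=xA$ and $A^2x^2=Ax$. Where you diverge is in establishing the memberships (\ref{eq_def_useful}.3) and (\ref{eq_def_useful}.4): you upgrade $x\in bS$ to $x\in b\mu S$ directly, writing $Ax=(Ab)s\in b\mu S$ and then $x=Ax^2=(Ax)x\in b\mu S$, whereas the paper routes through $x^2\in bS\cap Sc$ (invoking Theorem~\ref{thm_suff_reverse} to identify $x^2$ as the $(b,c)$-inverse of $(a_1a_2)^2$) and exhibits an explicit factor $epxa_1$ with $a_2b\mu(epxa_1)=a_2x^2a_1$. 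Your bootstrapping is the cleaner of the two and avoids the appeal to Theorem~\ref{thm_suff_reverse}. One point of commentary is off, however: you assert that the ``in particular'' clause is \emph{not} the $\mu=a_1$, $\nu=a_2$ instance of the first part because commutativity only forces $Ad\in dS$. The paper's proof shows it \emph{is} exactly that instance: the identity $dAx=d$ gives $d=da_1(a_2x)\in da_1S$, hence $dS=da_1S$ and so $Ad\in dS=da_1S$; dually $xAd=d$ gives $Sd=Sa_2d$ and $dA\in Sa_2d$. So the reduction you dismissed is valid --- one is never working ``under commutativity alone,'' since the $(d,d)$-inverse relations are also in force. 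This mis-assessment costs nothing, because the direct verification you substitute (via $a_2d=(a_2da_1)(a_2x)$, $x^2=dw$, and their duals) is itself correct and complete.
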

\begin{proof}
	By Theorem \ref{thm_comm}, $(a_1a_2)b\in b\mu S$ and $c(a_1a_2)\in S\nu c$ give $x(a_1a_2)=(a_1a_2)x$. Then,
	\begin{align*}
	&\left(a_2x^2a_1\right)\left(a_2a_1\right)\left(a_2b\mu\right)=a_2x^2\left(a_1a_2\right)^2b\mu=a_2\left(xa_1a_2\right)^2b\mu=a_2b\mu,\\
	&\left(\nu ca_1\right)\left(a_2a_1\right)\left(a_2x^2a_1\right)=\nu c\left(a_1a_2\right)^2x^2a_1=\nu c\left(a_1a_2x\right)^2a_1=\nu ca_1.
	\end{align*}
	Assume that $(a_1a_2)b=b\mu e$ and $c(a_1a_2)=f\nu c$, where $e,f\in S$. Also, we can assume that $x^2=bp=qc$, where $p,q\in S$, since $x^2\in bS\cap Sc$ (Theorem \ref{thm_suff_reverse} shows $x^2$ is the $(b,c)$-inverse of $(a_1a_2)^2$). We then claim that $a_2x^2a_1\in a_2b\mu S$. Indeed,
	\[
	a_2b\mu\left(epxa_1\right)=a_2\left(b\mu e\right)pxa_1=a_2\left(a_1a_2b\right)pxa_1=a_2a_1a_2\left(bp\right)xa_1=a_2\left(a_1a_2x^2\right)xa_1=a_2x^2a_1.
	\]
	Similarly, $a_2x^2a_1=(a_2xqf)\nu ca_1\in S\nu ca_1$.
	
	To prove the second part, we will need to obtain $(a_1a_2)d\in da_1S$ and $d(a_1a_2)\in Sa_2d$. From Theorem \ref{thm_comm}, $x(a_1a_2)=(a_1a_2)x$ implies $(a_1a_2)d\in dS$ and $d(a_1a_2)\in Sd$. On the other hand, $da_1a_2x=d$ gives $da_1S=dS$. Thus $(a_1a_2)d\in dS=da_1S$. Similarly, $d(a_1a_2)\in Sa_2d$. This complete the proof.
\end{proof}

Recall Lemma \ref{lem_relation}. We see that if $(a_1a_2)^{((a_1a_2)^k,(a_1a_2)^k)}=x$ (note that $x$ in this case commutes with $a_1a_2$), where $k=\operatorname{ind}(a_1a_2)$, then we have $(a_2a_1)^{((a_2a_1)^{k+1},(a_2a_1)^{k+1})}=a_2x^2a_1$ and $\operatorname{ind}(a_2a_1)\leqslant k+1$. This is known as Cline's formula for Drazin inverses. Conversely, $k\leqslant\operatorname{ind}(a_2a_1)+1$. Therefore, $|\operatorname{ind}(a_2a_1)-k|\leqslant 1$.

\begin{rmk}
	Recently, there are several extensions of Cline's formula for generalized inverses have been introduced (see e.g. \cite{zeng_new_2017}). We also consider the case when $a_1a_2a_1=a_1a_3a_1$. Let $a_1,a_2,a_3,d\in S$ so that $a_1a_2a_1=a_1a_3a_1$ and $a_1a_2$ is $(d,d)$-invertible with the $(d,d)$-inverse $x\in S$. When $x(a_1a_2)=(a_1a_2)x$, we can show that $a_3a_1$ is $(a_3da_1,a_3da_1)$-invertible with $(a_3a_1)^{(a_3da_1,a_3da_1)}=a_3x^2a_1$ which also satisfies $a_3x^2a_1(a_3a_1)=(a_3a_1)a_3x^2a_1$. In brief,
	\[
	a_3x^2a_1(a_3a_1)=a_3x^2a_1a_2a_1=a_3a_1a_2x^2a_1=a_3a_1a_2a_1a_2x^3a_1=a_3a_1a_3a_1a_2x^3a_1=(a_3a_1)a_3x^2a_1.
	\]
	We can also apply this technique to verify (\ref{eq_def_useful}.1) and (\ref{eq_def_useful}.2). In addition, the verification of (\ref{eq_def_useful}.3) and (\ref{eq_def_useful}.4) is nearly identical to that of Theorem \ref{thm_cline}. One thing we need to point out is that we shall show $d(a_1a_2)\in Sa_3d$ before proving $a_3x^2a_1\in Sa_3da_1$. Indeed, from 
	\[
	xa_1a_3d=xa_1a_3(xa_1a_2d)=xa_1a_3a_1a_2xd=xa_1a_2a_1a_2xd=xa_1a_2xa_1a_2d=d,
	\]
	we get $Sd=Sa_3d$. ($x(a_1a_2)=(a_1a_2)x$ implies $d(a_1a_2)\in Sd$.) Thus $d(a_1a_2)\in Sd=Sa_3d$.
\end{rmk}

\section*{Acknowledgments}

This research was supported by the grants from the National Natural Science Foundation of China (No. 11971294).

\bibliography{mybibfile}

\end{document}